\def\Z{{\mathbb{Z}}}
\def\Q{{\mathbb{Q}}}
\def\N{{\mathbb{N}}}
\def\C{{\mathbb{C}}}
\def\caL{{\mathcal{L}}}
\def\cO{{\mathcal{O}}}
\def\ra{{\rightarrow}}
\newcommand{\tc}{\tilde{c}}
\theoremstyle{plain}
\newtheorem{theorem}{Theorem}[section]
\newtheorem*{thm}{Theorem}
\newtheorem{corollary}[theorem]{Corollary}
\newtheorem{lemma}[theorem]{Lemma}
\newtheorem{proposition}[theorem]{Proposition}
\theoremstyle{definition}
\newtheorem{example}[theorem]{Example}
\newtheorem{question}[theorem]{Question}
\newtheorem{rem}[theorem]{Remark}
\newtheorem*{quest}{Question}
\DeclareMathOperator{\lcm}{lcm}
\def\ps@pprintTitle{%
  \let\@oddhead\@empty
  \let\@evenhead\@empty
  \let\@oddfoot\@empty
  \let\@evenfoot\@oddfoot
}
\title[Non-simple surfaces and genus 3 curves]{Non-simple polarised abelian surfaces and genus 3 curves with completely decomposable Jacobians}
\author{Robert Auffarth and Pawe\l{} Bor\'owka}
\address{R. Auffarth \\Departamento de Matem\'aticas, Facultad de
Ciencias, Universidad de Chile, Santiago\\Chile}
\email{rfauffar@uchile.cl}
\address{P. Bor\'owka\\ Institute of Mathematics, Jagiellonian University in Krak\'ow\\Poland}
\email{pawel.borowka@uj.edu.pl}
\keywords{}
\subjclass[2020]{14K10,14K12,14H52}
\begin{document}

\begin{abstract}
We study the space of non-simple polarised abelian surfaces. Specifically, we describe for which pairs $(m,n)$ the locus of polarised abelian surfaces of type $(1,d)$ that contain two complementary elliptic curve of exponents $m,n$, denoted $\mathcal{E}_d(m,n)$ is non-empty. We show that if $d$ is square-free, the locus $\mathcal{E}_d(m,n)$ is an irreducible surface (if non-empty). We also show that the loci $\mathcal{E}_d(d,d)$ can have many components if $d$ is an odd square. As an application, we show that for a genus $3$ curve with a completely decomposable Jacobian (i.e. isogenous to a product of 3 elliptic curves) the degrees of complementary coverings $f_i:C\rightarrow E_i,\ i=1,2,3$ satisfy $\lcm(\deg(f_1),\deg(f_2))=\lcm(\deg(f_1),\deg(f_3))=\lcm(\deg(f_2),\deg(f_3))$.
\end{abstract}

\maketitle

\section{Introduction}
The moduli space of non-simple principally polarised complex abelian varieties is well understood. Indeed, the space of all non-simple principally polarised abelian varieties that contain an abelian subvariety of fixed dimension and exponent is irreducible and can be described as the quotient of a product space of polarised abelian varieties (see for instance \cite{Deb}, \cite{Auff}, \cite{Bor}). 
In particular, for principally polarised surfaces, exponents of complementary elliptic curves coincide and the periods of such surfaces have to satisfy so called Humbert singular equations (see for example \cite{ALR, Bor, vdG}). 

On the other hand, the moduli space of polarised abelian varieties of a given (non-principal) type is much harder to deal with. The main obstacle to extend the results of the principally polarised case is the fact that two complementary abelian subvarieties of a non-principally polarised abelian variety do not necessarily have the same exponent. This inevitably leads to the conclusion that the moduli space of non-simple polarised abelian varieties that contain an abelian subvariety of a fixed dimension is not necessarily irreducible, and it is not clear a priori how to describe the irreducible components.

In \cite[Ch. IX]{vdG}, G. van der Geer recalls Humbert's singular equations and states that they can be generalised to non-principally polarised surfaces in \cite[Th. IX.2.4]{vdG}.
However the result only states that there are many irreducible components of non-simple surfaces but does not give any insight into what the exponents of the elliptic curves are.

%However, in Corollary \ref{examples} we show that different singular equations can give the same irreducible component in the moduli. Hence,  \cite[Th. IX.2.4]{vdG} is not enough to describe irreducible components of non-simple abelian surfaces. %Moreover, in Corollary \ref{12delta1} we show that the number of components for $d=2, \Delta=1$ is not as stated in \cite[Th. IX.2.4]{vdG}.

In \cite{G}, L. Guerra constructs moduli spaces of non-simple abelian varieties and especially their irreducible components. However, due to the fact that the construction is general and not very explicit, the author concludes his paper with questions about whether the constructed spaces are actually non-empty \cite[page 334]{G}. %In Corollary \ref{Guerra}, in the case of polarised abelian surfaces, we show a necessary and sufficient condition for his spaces to be non-empty and if $(1,d)$ is a polarisation with a square free $d$ we show that they consist of unique irreducible components.

In \cite{ALR} the authors have found equations for non-simple polarised varieties. However, since they are defined over $\mathbb{Q}$ they do not provide any information about irreducible components and possible exponents of subvarieties.

The aim of this paper is to understand the case of non-simple polarised abelian surfaces. Let $\mathcal{A}_2(d)$ be a moduli of $(1,d)$ polarised abelian surfaces. Denote by 
\begin{align*}
\mathcal{E}_d(m,n):=\left\{(A,\mathcal{L})\in\mathcal{A}_2(d):\begin{array}{l}A\text{ contains a pair of complementary}\\
\text{ elliptic curves of exponents }m,n\end{array}\right\}.
\end{align*}

The first main result of the paper %(that follows from Theorem \ref{irreducible} and Corollary \ref{periods}) 
can be summarised in the following theorem
\begin{thm}[see Theorem \ref{irreducible}]
For $d,m,n\in\Z_+$, the moduli space $\mathcal{E}_d(m,n)$ is a non-empty (2-dimensional) subvariety of $\mathcal{A}_2(d)$ if and only if
\begin{equation*}\lcm(m,n)\cdot \gcd(m,n,d)=\gcd(m,n)d.\end{equation*}
%In such a case, $\mathcal{E}_d(m,n)$ is irreducible of dimension 2. Moreover, we can write $m=\frac{cd}{a}, n=\frac{cd}{b}$ for some pairwise coprime $a,b,c\in\Z$, and there exist $u,v\in\Z$ satisfying $au-bv=c$
%such that $\mathcal{E}_d(m,n)$ is the image of the set of period matrices $$\left\{Z=\left[\begin{array}{cc}z_1&z_2  \\
 %   z_2 &z_3 
%\end{array}\right]: Im(Z)>0, z_3=\left(\frac{du}{b}+\frac{dv}{a}\right)z_2-\frac{d^2uv}{ab}z_1\right\}.$$
\end{thm}
The idea of the proof is to use certain cyclic isogenies from principally polarised abelian surfaces in order to reduce the problem to working in the principally polarised case. Since we understand the situation for principally polarised surfaces, we need to carefully compute the degrees of the induced restrictions of the isogeny to each elliptic curve. To show the irreducibility (for a square free $d$) we consider modular curves as moduli spaces of elliptic curves with level structure and show that our objects of interest are equivalent under the action of the symplectic group.

However, if there is an odd square that is a divisor of $d$, then we are able to show that there are multiple components of $\mathcal{E}_d(d,d)$. We show the following result.
\begin{thm}[see Theorem \ref{thmmany}]
If $p^2|d$ for some odd $d$, then the locus $\mathcal{E}_d(d,d)$ has at least $\sigma(p)$ irreducible components, where $\sigma(p)$ is the number of divisors of $p$.
\end{thm}
The idea to distinguish irreducible components is to consider the subgroup $E\cap F$ and show that they may be parametrised by distinct finite abelian groups.

We conclude this part of the paper with a natural question.
\begin{quest}[see Question \ref{qirr}]
Does the number of irreducible components of $\mathcal{E}_d(d,d)$ equal $\sigma(p)$, where $p$ is the maximal odd number satisfying $p^2|d$?  
\end{quest}

The next part of the paper involves finding a ``normalised" equation in the Siegel space of periods of abelian varieties, similar to Humbert equations. We prove the following theorem.
\begin{thm}[see Theorem \ref{periods}]
 For any $d,m,n$, we can do the following construction:
 \begin{itemize}
     \item find coprime $a,b,c$ such that $m=\frac{cd}{a},\ n=\frac{cd}{b}$,
     \item find $u,v$ such that $au-bv=c$,
     \item define $x=\frac{du}{b},\ y=\frac{dv}{a}$ (they automatically satisfy $xy|d$).
 \end{itemize}
 Then 
 the set of matrices satisfying $z_3=(x+y)z_2-xyz_1$ with $\Delta=(\frac{cd}{ab})^2$ maps into an irreducible component of $\mathcal{E}_d(m,n)$. 
\end{thm}

%Our approach can be seen as a careful combination of ideas in \cite{vdG} and \cite{G}. We reduce the problem of finding exponents to the principally polarised case with additional structure and use symplectic action to conclude the results. %Therefore, we significantly extend the statement of Theorem IX.2.4 from \cite{vdG} and solve an open problem stated in \cite[page 334]{G}.

As an application of the investigation of non-simple polarised abelian surfaces, we consider completely decomposable Jacobians of genus 3 curves. We show the following theorem.
\begin{thm}[see Theorem \ref{genus3}]
Let $C$ be a genus 3 curve such that $JC=E_1+E_2+E_3$ with $E_1,E_2,E_3$ complementary abelian subvarieties (so each elliptic curve $E_i$ induces a unique symmetric idempotent $\iota_i\in\mathrm{End}_\mathbb{Q}(JC)$ and $\iota_1+\iota_2+\iota_3=\mathrm{id}$). Then there exist maps $f_i:C\rightarrow E_i,\ i=1,2,3$ that satisfy $$\lcm(\deg(f_1),\deg(f_2))=\lcm(\deg(f_1),\deg(f_3))=\lcm(\deg(f_2),\deg(f_3)).$$
Moreover, if $d_1,d_2,d_3$ satisfy the above equations then there exists a 3--dimensional family of curves $C$ with maps $f_i$ satisfying $\deg(f_i)=d_i$.
\end{thm}

The structure of the paper is the following: In Section \ref{prelim} we go over some preliminaries of abelian surfaces and find important conditions on the exponents of elliptic curves, in Section \ref{grouptheoryresults} we prove crucial results on finite abelian groups with symplectic action that are applied in Section \ref{moduli} to prove the non-emptiness of the moduli spaces we are studying. In Section \ref{manycomp} we show when $\mathcal{E}_d(m,n)$ is irreducible and when the lower bound of components is large and in Section \ref{eqations} we show some equations for $\mathcal{E}_d(m,n)$. Finally in Section \ref{appl} we apply the results to genus 3 curves.

\subsubsection*{Acknowledgements}
The work of the second author has been partially supported by the Polish National Science Center project number 2018/30/E/ST1/00530. Some results have been achieved during his stay in Chile. He would like to thank University of Chile for hospitality. He also wishes to thank his wife Aleksandra for some valuable discussions on proofs in Section \ref{grouptheoryresults}.

\section{Preliminaries on abelian surfaces}\label{prelim}
Let $(A,\caL)$ be a complex polarised abelian surface of type $D:=(1,d)$. That is, $\caL$ is an ample line bundle and the map
\[\varphi_{\caL}:A\to A^\vee=:\mathrm{Pic}^0(A)\]
\[x\mapsto t_x^*\caL\otimes\caL^{-1}\]
is an isogeny with kernel isomorphic to $(\Z/d\Z)^2$. Moreover let $\mathcal{A}_2(d)$ denote the space of polarised abelian surfaces of type $(1,d)$. If $d=1$ we will simply use the standard notation $\mathcal{A}_2$. We wish to study the moduli space
\[\mathcal{E}_d(n):=\{(A,\mathcal{L})\in\mathcal{A}_2(d):A\text{ contains an elliptic curve of exponent }n\},\]
where the exponent of an elliptic curve $E\subseteq A$ is by definition the degree of $\caL|_{E}$, and in particular we want to understand what its irreducible components are.

If $E\subseteq A$ is an elliptic curve, then it corresponds to the image of a symmetric idempotent $\epsilon_E\in\mathrm{End}(A)\otimes\Q$ (where symmetric means that it is fixed by the Rosati involution given by $\caL$). We can define the \textit{complementary elliptic curve} of $E$ with respect to $\caL$ as 
\[F:=\mathrm{Im}(1-\epsilon_E),\]
where $\epsilon_F:=1-\epsilon_E$ is also a symmetric idempotent. We have that $E\cap F$ is finite and $A=E+F$, and therefore the natural addition map
\[E\times F\to A\]
is an isogeny.

We define another moduli space \begin{align*}
\mathcal{E}_d(m,n):=\left\{(A,\mathcal{L})\in\mathcal{A}_2(d):\begin{array}{l}A\text{ contains a pair of complementary}\\
\text{ elliptic curves of exponents }m,n\end{array}\right\}.
\end{align*}
Note that for a general pair $(m,n)$ the space $\mathcal{E}_d(m,n)$ will be empty, and that
\[\mathcal{E}_d(n)=\bigcup_{m}\mathcal{E}_d(m,n).\]

%Let $A=\C^2/\Lambda$ for some lattice $\Lambda$ in $\C^2$. We first want to analyze the possible values for $m$. 

\begin{rem}\label{remproduct}
It is a classical result (see \cite[Cor 12.1.2]{BL}) that if $d=1$ then $m=n$. In the non--principally polarised case the complementary polarisation types can be quite unexpected. For $a,b\in\mathbb{N}$, consider a product $E\times F$ with the product polarisation $\cO_E(a)\boxtimes\cO_F(b)$. The kernel of the isogeny associated to the polarisation is $\mathbb{Z}_{a}^2\times\mathbb{Z}_b^2$, and by elementary group theory, this group is isomorphic to $\mathbb{Z}_{gcd(a,b)}\times\mathbb{Z}_{lcm(a,b)}$. In particular, the product polarisation is of type $(gcd(a,b),lcm(a,b))$, and so $E\times F$ can naturally be seen to belong to the moduli space $\mathcal{A}_2\left(d\right)$ where $d=lcm(a,b)/gcd(a,b)$. Moreover, with this primitive polarisation $E$ has exponent $a/gcd(a,b)$ and $F$ has exponent $b/gcd(a,b)$, so they are coprime.
\end{rem}

Going back to the original problem, we would like to find conditions on the pair $(n,m)$ for it to correspond to complementary polarisation types on a polarised abelian surface of type $(1,d)$. In other words, we want to know when $\mathcal{E}_d(m,n)\neq\varnothing$. An obvious necessary condition that follows from \cite[Cor 2.4.6.d]{BL} is that $d|lcm(m,n)$.

Write $A=\mathbb{C}^2/\Lambda$ where $\Lambda$ is a lattice in $\mathbb{C}^2$. If $\{\lambda_1,\lambda_2,\mu_1,\mu_2\}$ is a symplectic basis for $\Lambda$, then we see that the sublattice given by $\langle d\lambda_1,\lambda_2,\mu_1,\mu_2\rangle$ gives an abelian variety $X$ and a cyclic isogeny 
\[f:X\to A\] 
of degree $d$. Moreover, we see that $f^*\caL\equiv d\theta$ where $\theta$ is a principal polarisation and $\equiv$ stands for numerical equivalence. Let 
\[E':=f^{-1}(E)^0,\hspace{0.5cm}F':=f^{-1}(F)^0\]
be the corresponding elliptic curves on $X$. By \cite[Corollary 2.5]{ALR}, we have that $E'$ and $F'$ are complementary elliptic curves on $X$ with respect to $d\theta$, and therefore with respect to $\theta$. In particular, since $\theta$ is a principal polarisation, we have that $E'$ and $F'$ have the same exponent which we will denote by $c$. 

Define \[a:=|E'\cap\ker f|=\deg f|_{E'},\hspace{0.5cm }b:=|F'\cap \ker f|=\deg f|_{F'}.\]

Now, we are ready to show the necessary condition for the exponents $m,n$.
\begin{proposition}\label{propmainEF}
Let $A$ be $(1,d)$ polarised abelian surface that is non-simple. Let $E,F$ be a pair of complementary elliptic curves with exponents $n$ and $m$, respectively. Then $m$, $n$ and $d$ satisfy the following equation:
\begin{equation}\label{sufficient}lcm(m,n)\cdot gcd(m,n,d)=gcd(m,n)d.\end{equation}
\end{proposition}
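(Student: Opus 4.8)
The plan is to push the whole configuration back to the principally polarised surface $X$ and extract the relation \eqref{sufficient} from the degrees of the restricted isogeny. I keep the notation set up just before Lemma~\ref{lemred}: there is a cyclic isogeny $f\colon X\to A$ of degree $d$ with $X$ principally polarised by $\theta$ and $f^*\caL\equiv d\theta$, the curves $E'=f^{-1}(E)^0$ and $F'=f^{-1}(F)^0$ are complementary on $X$ of common exponent $c$, and $a=\deg f|_{E'}=|E'\cap\ker f|$, $b=\deg f|_{F'}=|F'\cap\ker f|$. By Lemma~\ref{lemred} I may assume $E'\cap F'\cap\ker f=\{0\}$. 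The first observation is then two degree identities: since $f|_{E'}\colon E'\to E$ is an isogeny of degree $a$ and $\caL|_E$ has degree $n$, the pullback $(f|_{E'})^*(\caL|_E)=(f^*\caL)|_{E'}\equiv d\,\theta|_{E'}$ has degree $an$ on one side and $d\cdot\deg(\theta|_{E'})=dc$ on the other; hence $an=dc$, and symmetrically $bm=dc$.

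The main step is group-theoretic: I claim $a,b,c$ are pairwise coprime. Since $\ker f$ is cyclic of order $d$, its subgroups $E'\cap\ker f$ and $F'\cap\ker f$ are cyclic of orders $a\mid d$ and $b\mid d$, and as their intersection $E'\cap F'\cap\ker f$ is trivial we get $\gcd(a,b)=1$ (two subgroups of a cyclic group meet in a subgroup whose order is their gcd), hence also $ab\mid d$. For $\gcd(a,c)$ I use that $E'\cap F'$ is precisely the full $c$-torsion subgroup $E'[c]$ of $E'$ --- it is a subgroup of $E'$ of order $c^2$ killed by $c$, cf.\ \cite{BL}. If a prime $p$ divided $\gcd(a,c)$, then $E'\cap\ker f$, being cyclic of order $a$, would contain a subgroup of order $p$; that subgroup lies in $E'[p]\subseteq E'[c]=E'\cap F'$ and in $\ker f$, hence in $E'\cap F'\cap\ker f=\{0\}$, which is absurd. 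So $\gcd(a,c)=1$, and likewise $\gcd(b,c)=1$.

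The conclusion is then pure arithmetic. Writing $d=abe$ with $e\in\Z_+$, the identities $an=bm=dc$ become $n=bce$ and $m=ace$, and with $a,b,c$ pairwise coprime one computes $\gcd(m,n)=ce$ and $\gcd(m,n,d)=\gcd(ace,bce,abe)=e\gcd(ac,bc,ab)=e$, whence
\[
mn\cdot\gcd(m,n,d)=abc^2e^3=(ce)^2\cdot abe=\gcd(m,n)^2\,d,
\]
which is \eqref{sufficient}.

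The delicate point, and the one I would spend the most care on, is the identification $E'\cap F'=E'[c]$ and its interaction with Lemma~\ref{lemred}: this is exactly what forbids common prime factors between $c$ and $a$ or $b$, and therefore what excludes the spurious singular relations that the bare degree count $an=bm=dc$ would otherwise allow (compare the failure discussed around Theorem~IX.2.4 of \cite{vdG}). Everything else --- the reduction, the degree bookkeeping, and the final gcd computation --- is routine once that fact is in hand.
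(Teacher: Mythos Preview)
Your argument is correct and follows exactly the paper's approach: reduce via Lemma~\ref{lemred} to the case $E'\cap F'\cap\ker f=\{0\}$, use $E'\cap F'=E'[c]$ together with the cyclicity of $\ker f$ to force $a,b,c$ pairwise coprime, read off $n=cd/a$, $m=cd/b$, and verify the identity. The paper's proof is simply a terser version of yours---it asserts the coprimality and the exponent formulas without spelling out the degree bookkeeping $an=bm=cd$ or the cyclic-group argument for $\gcd(a,b)=1$ that you supply.
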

\begin{proof}
Let $f:X\to A$ be a cyclic isogeny with $ker(f)=<P>$ and let $E'=f^{-1}(E)^0, F'=f^{-1}(F)^0$.
Assume $E'$ is of exponent $c$ in $X$. Then $E'\cap F'=E'[c]=F'[c]$.
Let $a=|E'\cap ker(f)|$, i.e $\frac daP$ generates the intersection and let $b=|F'\cap ker(f)|$ (i.e. $\frac dbP$ generates the intersection).

Then we claim that $(a,b)=(a,c)=(b,c)$.
To show this, assume $(a,b)=q$. Then $\frac dq P$ is a multiple of both $\frac da P$ and $\frac db P$, so $\frac dq P\in E'[q]\cap F'[q]$ hence $q|c$, so $q|(a,c)$.
On the other hand, if $(a,c)=r$ then $\frac dr P\in ker(f)\cap E'[r]$, so  $\frac dr P\in ker(f)\cap F'[r]$ since $E'[c]=F'[c]$, and hence $r|b$ and $r|(a,b)$. An analogous argument shows that $(a,b)=(b,c)$. This shows that indeed $(a,b)=(a,c)=(b,c)=q$ for some $q|d$.

%Now, in order to have $\ker f\cap E'\cap F'= \{0\}$, we need that $a,b,c$ are pairwise coprime, i.e. $(a,b)=(a,c)=(b,c)=1$. 

Note that $exp(E)=\frac{cd}{a}$ and $exp(F)=\frac{cd}{b}$. Since $gcd(\frac{cd}{a}, \frac{cd}{b})=\frac{cdq}{ab}$ and $gcd(\frac{cd}{a}, \frac{cd}{b},d)=\frac{dq^2}{ab}$ both sides of Equation \ref{sufficient} equal $\frac{cd^2q}{ab}$.
\end{proof}
\begin{rem}\label{remcoprime}
In the previous proof, we have found a possible relation $(a,b)=(a,c)=(b,c)=q$ for some $q|d$.
It is worth to note that although $q$ may vary, the exponents $m,n$ do not change.
This is an indicator that $\mathcal{E}_d(m,n)$ may have multiple irreducible components.

Since our aim is to show that $\mathcal{E}_d(m,n)$ is non-empty, we assume from now on that $q=1$ and hence $a,b,c$ are pairwise coprime. We will comment on other cases in Section \ref{manycomp}.
\end{rem}

\subsection{A number of possible complementary exponents}
We define the function
\[\mu:\mathbb{N}^2\to\mathbb{N}\]
as follows: If $n,d\in\mathbb{N}$ we can write $n=\tilde{n}t$ and $d=\tilde{d}t$ where $t$ is the maximal integer such that $gcd(\tilde{n},t)=gcd(\tilde{d},t)=1$. Then we define  
\[\mu(n,d):=\sigma(t),\]
where $\sigma$ is the function that counts the number of positive divisors of a number.

\begin{lemma}\label{phind}
If $n,d\in\mathbb{N}$, then the number of $m\in\mathbb{N}$ that satisfy Equation \eqref{sufficient} is $\mu(n,d)$.
\end{lemma}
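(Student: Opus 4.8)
The plan is to reformulate Equation \eqref{sufficient} in a way that makes the count transparent, by stripping away the ``common part'' of $n$ and $d$. Write $n=\tilde n t$ and $d=\tilde d t$ as in the definition of $\mu$, so that $t$ is coprime to both $\tilde n$ and $\tilde d$, and every prime dividing $t$ divides neither $\tilde n$ nor $\tilde d$. I would first record the following restatement: given a candidate $m$, set $g=\gcd(m,n)$; then \eqref{sufficient} says $mn/g^2=d/\gcd(m,n,d)$, and since $\gcd(m,n,d)=\gcd(g,d)$, the equation becomes $mn\gcd(g,d)=g^2d$. The strategy is to show that solutions $m$ are in bijection with the divisors of $t$, via $m\mapsto $ (the divisor one reads off from how $m$ interacts with the prime factorisation of $t$), which will give exactly $\sigma(t)=\mu(n,d)$ solutions.

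The cleanest route is prime-by-prime. Equation \eqref{sufficient}, being multiplicative in nature, can be checked one prime $p$ at a time: writing $v_p$ for $p$-adic valuation and $\alpha=v_p(m)$, $\beta=v_p(n)$, $\delta=v_p(d)$, the equation reads
\[
\alpha+\beta+\min(\alpha,\beta,\delta)=2\min(\alpha,\beta)+\delta.
\]
First I would analyse the three primes-types separately: (i) primes $p\nmid nd$, where $\beta=\delta=0$ forces $\alpha+\min(\alpha,0,0)=0$, i.e. $\alpha=0$; (ii) primes $p\mid\gcd(\tilde n,\tilde d)$ or more generally $p\mid n$ and $p\mid d$, where one checks the equation pins down $\alpha$ uniquely in terms of $\beta,\delta$ (splitting into the cases $\beta\le\delta$ and $\beta>\delta$); (iii) primes $p\mid t$, where exactly one of $\beta,\delta$ is positive and the other is $0$ — say $p\mid n$, $p\nmid d$ so $\beta=v_p(t)$, $\delta=0$ — and the equation becomes $\alpha+\beta+\min(\alpha,\beta,0)=2\min(\alpha,\beta)+0$, i.e. $\alpha+\beta=2\min(\alpha,\beta)$, forcing $\alpha=\beta=v_p(t)$ or $\alpha\ge\beta$ collapsing to $\alpha=\beta$... here I need to be careful: $\min(\alpha,\beta,0)=0$ gives $\alpha+\beta=2\min(\alpha,\beta)$, hence $\alpha=\beta$, and then $\alpha$ can be any value $0\le\alpha\le v_p(t)$? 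Re-examining: if $\alpha=\beta$ the identity $2\alpha+0=2\alpha+0$ holds for every $\alpha\le\beta$, but if $\alpha>\beta$ then $\min=\beta$ and LHS $=\alpha+\beta+0$, RHS $=2\beta$, forcing $\alpha=\beta$. So at a prime $p\mid t$ the valuation $v_p(m)$ may be anything from $0$ up to $v_p(t)=\beta$, giving $v_p(t)+1$ choices, independently across such primes. Multiplying, the total number of $m$ is $\prod_{p\mid t}(v_p(t)+1)=\sigma(t)=\mu(n,d)$.

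The main obstacle — and the only place the argument is genuinely delicate rather than bookkeeping — is case (ii), the primes dividing both $n$ and $d$: there one must verify that the identity $\alpha+\min(\alpha,\beta,\delta)=2\min(\alpha,\beta)+\delta-\beta$ has a \emph{unique} solution $\alpha$ for each $(\beta,\delta)$ with $\beta,\delta\ge 1$, which requires separately treating $\beta\le\delta$ (expect $\alpha=\delta$, or $\alpha$ forced so that $\min(\alpha,\beta)$ and $\min(\alpha,\beta,\delta)$ balance) and $\beta>\delta$, and ruling out that a range of $\alpha$'s works the way it did at primes dividing $t$. Once uniqueness at these primes and the $(v_p(t)+1)$-fold freedom at primes dividing $t$ are established, the count follows by multiplicativity, and one should also note in passing that at least one solution exists (e.g. the one given by the proof of Proposition \ref{propmainEF}), so the set is non-empty and the formula $\mu(n,d)$ is exact. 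I would also double-check the edge case $t=1$, where $\mu(n,d)=1$ and indeed $m$ is uniquely determined by $n$ and $d$.
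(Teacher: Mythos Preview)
Your prime-by-prime strategy is exactly the paper's approach, and the displayed valuation identity
\[
\alpha+\beta+\min(\alpha,\beta,\delta)=2\min(\alpha,\beta)+\delta
\]
is correct. The problem is that you have misread what $t$ is, and as a result your case labels are swapped.

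Recall $n=\tilde n t$, $d=\tilde d t$ with $\gcd(\tilde n,t)=\gcd(\tilde d,t)=1$ and $t$ maximal. For a prime $p\mid t$ this forces $v_p(\tilde n)=v_p(\tilde d)=0$, hence $v_p(n)=v_p(d)=v_p(t)>0$. So primes dividing $t$ are precisely those with $\beta=\delta>0$, \emph{not} those where exactly one of $\beta,\delta$ vanishes. Your case (iii) therefore does not describe primes of $t$ at all; it describes part of the complement.

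This mislabeling is what makes your computation in (iii) self-contradictory. With $\delta=0$ and $\beta>0$ the equation reads $\alpha+\beta=2\min(\alpha,\beta)$, which forces $\alpha=\beta$ uniquely; there is no range $0\le\alpha\le\beta$ here, and your own derivation shows it. The freedom actually lives inside your case (ii), in the subcase $\beta=\delta$: then the equation collapses to $\alpha=\min(\alpha,\beta)$, i.e.\ $\alpha\le\beta$, giving $\beta+1=v_p(t)+1$ choices. In the remaining subcases $\beta\ne\delta$ of (ii), and in both subcases of your (iii), the solution for $\alpha$ is unique (equal to $\max(\beta,\delta)$). So your expectation that case (ii) yields uniqueness throughout and case (iii) yields the divisor count is exactly backwards; once you relabel, the product of the $(v_p(t)+1)$'s over $p\mid t$ gives $\sigma(t)=\mu(n,d)$ as desired, and this is precisely the paper's argument.
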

\begin{proof}
Write 
\[n=\prod_{i=1}^rp_i^{\alpha_i},\hspace{0.5cm}d=\prod_{i=1}^rp_i^{\beta_i},\hspace{0.5cm}m=\prod_{i=1}^rp_i^{\gamma_i}\]
where $\alpha_i,\beta_i,\gamma_i\geq0$ and $\beta_i\leq\max\{\alpha_i,\gamma_i\}$. Now Equation \eqref{sufficient} turns into
\[\alpha_i+\gamma_i+\min\{\alpha_i,\beta_i,\gamma_i\}=2\min\{\alpha_i,\gamma_i\}+\beta_i.\]
If $\alpha_i=\beta_i$ (only when $p\mid t$), we get the equation
\[\gamma_i+\min\{\alpha_i,\gamma_i\}=2\min\{\alpha_i,\gamma_i\},\]
which is satisfied if and only if $\gamma_i\leq \alpha_i$. Therefore $\gamma_i$ can take on $\alpha_i+1$ possibilities.

If $\alpha_i>\beta_i$, then we get the equation
\[\alpha_i+\min\{\beta_i,\gamma_i\}=\gamma_i+\beta_i.\]
This can only be satisfied if $\gamma_i=\alpha_i$ and so there is one possibility.

If $\alpha_i<\beta_i$, then we get the equation
\[\alpha_i+\gamma_i=\min\{\alpha_i,\gamma_i\}+\beta_i\]
which has the unique solution $\gamma_i=\beta_i$. By counting the total number of possibilities for each $i$, we get the number $\mu(n,d)$.
\end{proof}

The aim of the rest of the paper is to show that the restriction of Proposition \ref{propmainEF} is in fact sufficient for the existence of elliptic curves with the given exponents. %, and that the locus $\mathcal{E}_d(m,n)$ where $m,n$ and $d$ satisfy Equation \eqref{sufficient} is irreducible. 
 Finally, as a consequence Lemma \ref{phind} computes the number of possible complementary exponents for $\mathcal{E}_d(n)$.

\section{Symplectic modules over $\Z_d$}\label{grouptheoryresults}
Now we need some results from the theory of abelian groups. Let $d>1$ be a positive integer. Our main object of interest is the group $\Z_d^4$ equipped with a non-degenerate symplectic form $\omega$ over $\Z_d$ (i.e. $\omega$ is bilinear, alternating and $\omega(x,\cdot)=0$ if and only if $x=0$). We will use the perspective that $\Z_d^4$ is a free module over $\Z_d$. %Our convention is that $x,y,P,Q, e_i,f_i$ will denote vectors in $\Z_d^4$ and $a,b,c,k,l$ will be scalars in $\Z_d$.
The results can be stated in a more general setting, but it is not necessary for our purposes.

We start with a useful fact that we will use quite often:
\begin{lemma}\label{lemdivision}
Let $k|d$ and let $x\in\Z_d^n$ be of order $k$. Then there exists $y\in\Z_d^n$ of order $d$ such that $x=\frac dk y$. 
\end{lemma}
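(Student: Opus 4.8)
The plan is to turn the statement into an elementary claim about greatest common divisors and then settle it one prime of $d$ at a time. Set $\ell:=d/k$. Recall that the order of an element $z\in\Z_d^n$ equals $d/\gcd(z_1,\dots,z_n,d)$, the $\gcd$ being taken with any integer lifts of the coordinates; so the hypothesis on $x$ says $\gcd(x_1,\dots,x_n,d)=\ell$. In particular $\ell\mid x_i$ for every $i$, hence $x$ lies in the image of multiplication by $\ell$ on $\Z_d^n$ and the equation $\ell y=x$ has at least one solution $y_0\in\Z_d^n$. The kernel of multiplication by $\ell$ on $\Z_d^n$ is $k\Z_d^n$, so the full solution set of $\ell y=x$ is the coset $y_0+k\Z_d^n$. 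It therefore suffices to produce $w\in\Z_d^n$ for which $y:=y_0+kw$ has order $d$, i.e. such that no prime $p\mid d$ divides all coordinates of $y$; note that automatically $\ell y=\ell y_0+dw=\ell y_0=x$.

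Next I would dispose of the primes $p\mid d$ that also divide $k$. Using the integer lift $x_i=\ell(y_0)_i$ and $d=\ell k$, the relation $\gcd(x_1,\dots,x_n,d)=\ell$ becomes $\gcd(\ell(y_0)_1,\dots,\ell(y_0)_n,\ell k)=\ell$, and dividing through by $\ell$ gives $\gcd((y_0)_1,\dots,(y_0)_n,k)=1$. Hence for each prime $p\mid k$ some coordinate $(y_0)_i$ is coprime to $p$; since $p\mid k$, that coordinate of $y=y_0+kw$ is congruent to $(y_0)_i$ modulo $p$ and so stays coprime to $p$, whatever $w$ is chosen to be.

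It then remains to choose $w$ to handle the primes $p\mid d$ with $p\nmid k$. For any such $p$, $k$ is invertible modulo $p$, so exactly one residue class of $w_1$ modulo $p$ makes $(y_0)_1+kw_1$ divisible by $p$, and as $p\ge 2$ there is a permissible class. By the Chinese Remainder Theorem I can pick a single $w_1\in\Z_d$ lying outside the forbidden class for all of the (finitely many) primes $p\mid d$ with $p\nmid k$ at once, and then set the remaining coordinates of $w$ equal to $0$. With this $w$, for every prime $p\mid d$ some coordinate of $y=y_0+kw$ is a unit modulo $p$, so $y$ has order $d$, and $\ell y=x$ as noted. The only delicate point is bookkeeping: making precise the translation between ``$x$ has order $k$'' and the coprimality statement $\gcd((y_0)_1,\dots,(y_0)_n,k)=1$, and checking that the CRT choice of $w_1$ can be made coherently across all the relevant primes simultaneously. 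There is no genuine obstacle — the content of the lemma is simply that a partially primitive vector of $\Z_d^n$ can be completed to an honestly primitive one within a prescribed coset.
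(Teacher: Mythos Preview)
Your proof is correct. The route, however, is genuinely different from the paper's.

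The paper argues structurally: it quotients by $\langle x\rangle$, invokes the classification of finite abelian groups to identify $\Z_d^n/\langle x\rangle\cong\Z_d^{n-1}\oplus\Z_{d/k}$, and then notes that the preimage of the $\Z_{d/k}$ summand is a cyclic group of order $d$ containing $x$; any generator of that preimage serves as $y$. This is short, but it leans on the structure theorem (equivalently, on the existence of an automorphism of $\Z_d^n$ carrying $x$ to a standard coordinate vector) and on recognising that the particular preimage is cyclic. Your argument is explicit and avoids any classification result: you translate ``order $k$'' into the formula $\gcd(x_1,\dots,x_n,d)=d/k$, write down one solution $y_0$ of $(d/k)\,y=x$, and then adjust $y_0$ within its coset $y_0+k\Z_d^n$. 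The primes $p\mid k$ are handled automatically by the coprimality $\gcd((y_0)_1,\dots,(y_0)_n,k)=1$, and for the remaining primes $p\mid d$ with $p\nmid k$ you use CRT on a single coordinate to dodge the one forbidden residue class modulo each such $p$. What you gain is a fully constructive $y$ with no appeal to the fundamental theorem of finite abelian groups; the cost is a longer, more computational write-up. Both approaches are standard, and yours would fit well in a context where one wants to keep the toolbox minimal.
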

\begin{proof}
Let $x$ be of order $k$ and consider the canonical projection $\pi\colon \Z_d^n\to\Z_d^n/\langle x\rangle$. %that is of order $\frac da$. 
Note that by the fundamental theorem of finite abelian groups $\Z_d^n/\langle x\rangle\cong\Z_d^{n-1}\oplus \Z_{\frac{d}{k}}$. Now, $\pi^{-1}(0\oplus\Z_{\frac{d}{k}})$ is a cyclic group of order $d$ that contains $x$. Hence, there exists a generator $y$ such that $x=\frac{d}{k}y$.
\end{proof}
As usual, for $k\in\mathbb{N}, E\subset\Z_d^n$ we denote $kE=\{kx: x\in E\}$ and $E[k]=\{x\in E:kx=0\}$. Then, the lemma yields an immediate corollary.
\begin{corollary}
If $E\cong \Z_d^2\subset\Z_d^4$ and $k\mid d$, then $E[k]=\frac dk E$.
\end{corollary}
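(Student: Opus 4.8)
The plan is to prove the two inclusions separately, with the nontrivial direction reduced immediately to Lemma \ref{lemdivision}. The inclusion $\frac dk E\subseteq E[k]$ is immediate: if $x=\frac dk y$ with $y\in E\subseteq\Z_d^n$, then $kx=dy=0$ because every element of $\Z_d^n$ is killed by $d$. So the content is the reverse inclusion $E[k]\subseteq\frac dk E$.

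For that, I would take $x\in E[k]$ and let $k'$ denote its order, so that $k'\mid k\mid d$. The key observation is that $E$ is \emph{itself} a free $\Z_d$-module of rank $2$ (this is exactly the hypothesis $E\cong\Z_d^2$), so Lemma \ref{lemdivision} may be applied inside $E$: it produces $y\in E$ of order $d$ with $x=\frac{d}{k'}y$. Writing $\frac{d}{k'}=\frac dk\cdot\frac{k}{k'}$ and noting $\frac{k}{k'}y\in E$, we get $x=\frac dk\bigl(\frac{k}{k'}y\bigr)\in\frac dk E$, as desired.

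Alternatively, and perhaps cleaner to record, one can close by a cardinality count: $E[k]\cong\Z_k^2$ has $k^2$ elements, while multiplication by $\frac dk$ on $\Z_d$ has image of order $k$, hence $\frac dk E$ also has $k^2$ elements; combined with the easy inclusion this already forces equality. Either way the argument is short; the only point that deserves an explicit word is that Lemma \ref{lemdivision} is being invoked for $E$ rather than for the ambient $\Z_d^4$, which is legitimate precisely because $E$ is assumed isomorphic to $\Z_d^2$. I do not anticipate a genuine obstacle here — this corollary is a routine consequence of the lemma, and the write-up is essentially the two lines above.
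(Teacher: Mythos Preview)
Your argument is correct and essentially identical to the paper's: both prove the easy inclusion first, then for $x\in E[k]$ of order $\ell$ (your $k'$) invoke Lemma~\ref{lemdivision} to write $x=\frac{d}{\ell}y=\frac{d}{k}\cdot\frac{k}{\ell}y\in\frac{d}{k}E$. Your explicit remark that the lemma is applied inside $E$ (using $E\cong\Z_d^2$) rather than the ambient $\Z_d^4$ is a worthwhile clarification that the paper leaves implicit, and your alternative cardinality argument is a valid shortcut not present in the paper.
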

\begin{proof}
The inclusion $\frac{d}{k}E\subseteq E[k]$ is obvious. If $x\in E[k]$ is of order $\ell$ (and so $\ell\mid k$), then the previous lemma tells us that $x=\frac{d}{\ell}y$ for some $y$ of order $d$. Now 
\[x=\frac{d}{k}\frac{k}{\ell}y\]
which completes the proof. 
\end{proof}
The first result deals with the case of a direct sum.
\begin{lemma}\label{technical0}
For an integer $d>1$, let $E\cong F\cong \Z_d^2$ with the canonical bases $e_1,e_2$ and $f_1,f_2$ be equipped with the alternating forms given by $\omega_E(e_1,e_2)=\omega_F(f_1,f_2)=1$. Consider $\Z_d^4=E\oplus F$ with the direct sum of forms $\omega_E+\omega_F$. Let $a,b$ be coprime divisors of $d$.
Then, there exists a cyclic subgroup $G$ of order $d$ such that $|G\cap E|=a$ and $|G\cap F|=b$. Moreover, all such groups are equivalent under the action of a (symplectic) group $Sp(E,\omega_E)\times Sp(F,\omega_F)$.
\end{lemma}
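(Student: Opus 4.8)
The plan is to exhibit an explicit group $G$ for the existence part and to reduce the uniqueness part to a transitivity statement for the symplectic action on vectors of a fixed order in $\Z_d^2$. For existence, I would first record the elementary formula that if $G=\langle(x,y)\rangle\subseteq E\oplus F$ is cyclic of order $d$ with $x\in E$ and $y\in F$, then $|G\cap E|=d/\mathrm{ord}(y)$ and, symmetrically, $|G\cap F|=d/\mathrm{ord}(x)$: identifying $G$ with $\Z_d$ via $k\mapsto k(x,y)$, the element $k(x,y)$ lies in $E$ exactly when $ky=0$ in $F$, i.e. when $\mathrm{ord}(y)\mid k$, and $\mathrm{ord}(y)\mid d$. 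Hence it suffices to pick $x\in E$ of order $d/b$ and $y\in F$ of order $d/a$ with $\mathrm{ord}\big((x,y)\big)=\mathrm{lcm}(d/a,d/b)=d$. The choice $x=be_1$, $y=af_1$ works, since $\mathrm{ord}(be_1)=d/\gcd(b,d)=d/b$ as $b\mid d$ (and likewise for $y$), while $\mathrm{lcm}(d/a,d/b)=d$ follows prime by prime from $\gcd(a,b)=1$: at every prime one of the $p$-adic valuations of $a,b$ vanishes, so the full valuation of $d$ is attained by one of $d/a$, $d/b$.

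For the ``moreover'' part, let $G,G'$ be two subgroups as in the statement and choose generators $(x,y)$ and $(x',y')$. The formula above forces $\mathrm{ord}(x)=\mathrm{ord}(x')=d/b$ and $\mathrm{ord}(y)=\mathrm{ord}(y')=d/a$. I would then establish the key claim that $Sp(\Z_d^2,\omega)$ acts transitively on the set of vectors of a given order $\ell\mid d$; concretely, every such vector lies in the orbit of $\tfrac{d}{\ell}e_1$. Granting this, choose $\phi\in Sp(E,\omega_E)$ with $\phi(x)=x'$ and $\psi\in Sp(F,\omega_F)$ with $\psi(y)=y'$; then $(\phi,\psi)$ carries the generator $(x,y)$ of $G$ to the generator $(x',y')$ of $G'$, hence $(\phi\oplus\psi)(G)=G'$.

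To prove the transitivity claim, write a vector $v$ of order $\ell$ as $v=\tfrac{d}{\ell}z$ with $z$ of order $d$ (Lemma \ref{lemdivision}), so it is enough to send an arbitrary order-$d$ vector $z=z_1e_1+z_2e_2$ to $e_1$ by a symplectic automorphism. For this I would complete $z$ to a symplectic basis: the image of the homomorphism $w\mapsto\omega(z,w)$ on $\Z_d^2$ is the subgroup generated by $z_1$ and $z_2$, which equals $\langle\gcd(z_1,z_2,d)\rangle=\Z_d$ because $z$ has order $d$; so some $w$ satisfies $\omega(z,w)=1$, and $e_1\mapsto z$, $e_2\mapsto w$ defines an element of $Sp(\Z_d^2,\omega)$ taking $\tfrac{d}{\ell}e_1$ to $v$. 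Composing such maps for $v$ and for a second vector of the same order yields the transitivity.

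The step I expect to be the main obstacle is precisely this completion of a primitive (order-$d$) vector of $\Z_d^2$ to a symplectic basis over the ring $\Z_d$; everything else reduces to the counting formula for $|G\cap E|$ and the prime-by-prime identity $\mathrm{lcm}(d/a,d/b)=d$. One should also take care that the intersection orders genuinely pin down $\mathrm{ord}(x),\mathrm{ord}(y)$ (and $\mathrm{ord}(x'),\mathrm{ord}(y')$), so that the transitivity claim is applied at exactly those orders.
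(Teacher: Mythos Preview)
Your proposal is correct and follows essentially the same route as the paper. The existence argument is identical (generator $be_1+af_1$), and for uniqueness both arguments amount to: the $E$- and $F$-components of any generator have prescribed orders $d/b$ and $d/a$, so one rewrites each component as $b$ (resp.\ $a$) times an order-$d$ vector and completes that vector to a symplectic basis of $\Z_d^2$. The paper carries this out inline (defining $f_1'=z'f_1+t'f_2$ and asserting one can choose $f_2'$ with $\omega_F(f_1',f_2')=1$), whereas you package it as a separate transitivity claim for $Sp(\Z_d^2)$ on vectors of a given order and actually justify the completion step via the surjectivity of $w\mapsto\omega(z,w)$; this makes your version slightly more self-contained at the point you yourself flagged as the main obstacle.
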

\begin{proof}
Firstly, consider $P=be_1+af_1$. Since $e_1,f_1$ are independent, for $k\in\Z_d$ we have $kP=0$ if and only if $kb=ka=0\ mod\ d$. Since $a,b$ are coprime, then $lcm(\frac da,\frac db)=d$, so $d|k$. Hence $P$ is of order $d$.
Now, we need to prove that $G=\langle P\rangle$ satisfies the assertion. Firstly, note that $kP\in E$ if and only if $\frac{d}{a}|k$. Hence, we can write $k=\frac{d}{a}l$, for $l=1,\ldots,a$. There are exactly $a$ possible  values of $l$ and $\frac{ld}{a}e_1=0$ if and only if $l=a$. This shows $|G\cap E|=a$. Completely analogously, one can show that $|G\cap F|=b$.

Let $H$ be another such group. Let $Q=k_1e_1+k_2e_2+zf_1+tf_2$ be a generator of $H$. Since $H\cap E$ is of order $a$, we know that $\frac{d}{a}z=\frac{d}{a}t=0$. Hence $z=az',\ t=at'$. for some $z',t'$. Let $f_1'=z'f_1+t'f_2$. Since $Q$ is of order $d$ and therefore $zf_1+tf_2$ is of order $\frac da$, we get that $f_1'$ is of order $d$. Let $f_2'\in F$ be chosen so that $\omega_F(f_1',f_2')=1$. Now, for a symplectic basis $e_1,e_2,f'_1,f'_2$ we have $Q=k_1e_1+k_2e_2+af_1'$. Analogously, one can find symplectic $e_1',e_2'\in E$ such that $Q=be_1'+af_1'$. Since both bases are symplectic, the transition matrix belongs to $Sp(E,\omega_E)\times Sp(F,\omega_F)$ and hence $H$ is equivalent to $G$.
\end{proof}

\begin{rem}
For a non-prime integer $d$, having $E\cong\Z_d^2, E\subset\Z_d^4$ does not imply the existence of $F$ such that $\Z_d^4=E\oplus F$. Therefore, we need a more general and more technical construction that will involve working over $\Z_{cd}$, for some $c>1$.
\end{rem}

For an integer $d>1$, let $a,b,c$ be pairwise coprime integers, such that $a,b$ are divisors of $d$ and $c>1$. Let $E\cong F\cong \Z_{cd}^2$ with the canonical bases $e_1,e_2$ and $f_1,f_2$ be equipped with the alternating forms given by $\omega_E(e_1,e_2)=\omega_F(f_1,f_2)=1$. Consider $\Z_d^4=E\oplus F$ with the direct sum of forms $\omega_E+\omega_F$. Assume, there is an isotropic subgroup $K\cong \Z_c^2$ such that $K\cap E=K\cap F=\{0\}$ and let $X=(E\oplus F)/K$ with $\pi:E\oplus F \to X$ be the quotient projection. Such group is an example of so called allowable isotropic subgroup in \cite{Bor}. 

Note that since $K\subset (E\oplus F)[c]$ we can write every element of $K$ as $dP$ and instead of using restricted symplectic form (that may be trivial), we use the induced symplectic form that is given by $\omega_K(dP,dQ)=\omega(P,Q)\ mod\ c$ for which $K$ has to be isotropic, too.

We will start with the following lemma.
\begin{lemma}\label{anyK}
For any $K$ satisfying the condition above there exists a symplectic basis $e_1,e_2\in E, f_1,f_2\in F$ such that $K=\langle dbe_1+daf_1, dae_2-dbf_2\rangle$.
\end{lemma}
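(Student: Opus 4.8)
The plan is to push everything down to the module $\Z_c^4$ equipped with the symplectic form induced by reduction modulo $c$, where the structure of $K$ becomes transparent. Since $K\subseteq(E\oplus F)[c]$ and, as noted before the lemma, $(E\oplus F)[c]=d(E\oplus F)$, the assignment $dP\mapsto\bar P$ (the class of $P$ modulo $c$) gives an isomorphism $(E\oplus F)[c]\xrightarrow{\ \sim\ }\Z_c^4=\bar E\oplus\bar F$, where $\bar E\cong\bar F\cong\Z_c^2$ carry the reductions $\bar\omega_E,\bar\omega_F$ of the forms. Under this identification $\omega_K$ corresponds to $\bar\omega_E+\bar\omega_F$, and the image $K'$ of $K$ is a subgroup isomorphic to $\Z_c^2$ that is isotropic and satisfies $K'\cap\bar E=K'\cap\bar F=\{0\}$. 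It therefore suffices to find symplectic bases $\bar e_1,\bar e_2$ of $\bar E$ and $\bar f_1,\bar f_2$ of $\bar F$ with $K'=\langle b\bar e_1+a\bar f_1,\ a\bar e_2-b\bar f_2\rangle$, together with lifts of the $\bar f_i$ to a symplectic basis of $F$; pulling back along the identification then yields $K=\langle dbe_1+daf_1,\ dae_2-dbf_2\rangle$.

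Next I would describe $K'$ as a graph. The projections $p_{\bar E}\colon K'\to\bar E$ and $p_{\bar F}\colon K'\to\bar F$ are injective because $K'\cap\bar F=K'\cap\bar E=\{0\}$, hence, being injective maps between groups of the same finite cardinality, they are isomorphisms. Thus $K'$ is the graph $\{(x,\phi(x)):x\in\bar E\}$ of the isomorphism $\phi:=p_{\bar F}\circ p_{\bar E}^{-1}$, and isotropy of $K'$ translates into $\bar\omega_F(\phi(x),\phi(y))=-\bar\omega_E(x,y)$ for all $x,y$, i.e. $\phi$ is an anti-isometry. Since $a,b$ are coprime to $c$ they are units in $\Z_c$; so I would keep the canonical (symplectic) basis $e_1,e_2$ of $E$, let $\bar e_1,\bar e_2$ be its reduction, and set $\bar f_1:=ba^{-1}\phi(\bar e_1)$ and $\bar f_2:=-ab^{-1}\phi(\bar e_2)$ in $\bar F$. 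A one-line computation using that $\phi$ is an anti-isometry gives $\bar\omega_F(\bar f_1,\bar f_2)=1$, so $\bar f_1,\bar f_2$ is a symplectic basis of $\bar F$; and since $\phi(\bar e_1)=ab^{-1}\bar f_1$ and $\phi(\bar e_2)=-ba^{-1}\bar f_2$ with $a,b$ units, multiplying the generators $(\bar e_1,\phi(\bar e_1))$ and $(\bar e_2,\phi(\bar e_2))$ of $K'$ by $b$ and by $a$ respectively yields $K'=\langle b\bar e_1+a\bar f_1,\ a\bar e_2-b\bar f_2\rangle$.

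Finally, I would lift $\bar f_1,\bar f_2$ to a genuine symplectic basis of $F$: writing the matrix with columns $\bar f_1,\bar f_2$ as an element of $Sp(\bar F,\bar\omega_F)=SL_2(\Z_c)$ and using surjectivity of the reduction $SL_2(\Z_{cd})\to SL_2(\Z_c)$, one chooses $f_1,f_2\in F$ forming a symplectic basis and reducing modulo $c$ to $\bar f_1,\bar f_2$. Then $d(be_1+af_1)$ and $d(ae_2-bf_2)$ lie in $(E\oplus F)[c]$ and map under the identification of the first paragraph to $b\bar e_1+a\bar f_1$ and $a\bar e_2-b\bar f_2$, hence generate $K$. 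I expect the only points requiring genuine care to be the bookkeeping around the identification $(E\oplus F)[c]\cong\Z_c^4$ — in particular verifying that $\omega_K$ corresponds to $\bar\omega_E+\bar\omega_F$, using the well-definedness of $\omega_K$ recorded just before the lemma — and the (standard) surjectivity of $SL_2(\Z_{cd})\to SL_2(\Z_c)$ used for the lift; the structural heart of the argument, that $K$ is the graph of an anti-isometry, is then essentially immediate.
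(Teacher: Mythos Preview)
Your argument is correct and a bit different in spirit from the paper's. The paper works directly in $\Z_{cd}^4$: it picks two generators $x,y$ of $K$, uses (via a Dirichlet-type lifting of generators from $\Z_c$ to $\Z_{cd}$) that each can be written as $dbe+daf$ (resp.\ $dae'-dbf'$) with $e,e'\in E$ and $f,f'\in F$ of full order $cd$, then uses the isotropy of $K$ to force the two partial bases to patch into a symplectic one, after a final rescaling. You instead reduce modulo $c$, identify $K'$ as the graph of an anti-isometry $\phi:\bar E\to\bar F$, read off a symplectic basis of $\bar F$ from $\phi$, and lift via the surjection $SL_2(\Z_{cd})\to SL_2(\Z_c)$. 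Your route is more structural: it makes clear why the result is true (any maximal isotropic transversal to both factors is a graph of an anti-isometry, and all such are $Sp\times Sp$-equivalent), it leaves the $E$-basis untouched, and it replaces the somewhat heavy appeal to Dirichlet with the standard surjectivity of $SL_2$ under reduction. The paper's route, on the other hand, stays closer to the explicit coordinate manipulations used in the neighbouring Lemmas~\ref{technical0} and~\ref{technical1}, which makes it easier to recycle in the proof of Lemma~\ref{technical1} where one must normalise $K$ and $G$ simultaneously.
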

\begin{proof}
Firstly, note that $K=\langle dbe_1+daf_1, dae_2-dbf_2\rangle $ satisfies the above properties. Certainly $K\subset (E\oplus F)[c]$. Moreover, since $a,b$ are coprime, both generators are of order exactly $c$ and they are linearly independent. $K$ is obviously isotropic since $ab=ba$. Moreover, since $a,c$ (and $b,c$) are coprime, elements $dbe_1,dae_2$ generates $E[c]$ and hence $K\cap E=\{0\}$. Analogously $K\cap F=\{0\}$. 

Now, consider the multiplication map $f_{bd}:\Z_{cd}\to \Z_c$. It is surjective, because $b,c$ are coprime. Moreover, for any generator $x\in \Z_c$ there exists a generator $y\in\Z_{cd}$ such that $x=bdy$. To see this, one can use Dirichlet's theorem on arithmetic progressions to see that $y_k=y+kc$ has infinitely many primes, hence at least one is coprime to $d$ and hence its class in $\Z_{cd}$ is a generator. The map $f_{ad}$ has the same properties. 

Now, let $x,y\in K$ be generators of $K$. By what we have proved, we can write $x=dbe_1+daf_1$ and $y=dae_2'-dbf_2'$ for some elements $e_1, e_2'\in E, f_1,f_2'\in F$ of order $cd$. By the fact that $K\cap E=\{0\}$ we see that $e_1,e_2'$ are independent and hence generate $E$. Therefore $\omega(e_1,e_2')=\alpha$ is a unit in $\Z_{cd}$. Note that $K$ is isotropic so by direct calculation of $\omega_K(x,y)$ one gets that $\omega(f_1,f_2')=\alpha\ mod \ c$, too. Hence, by a possible change of $f_2'$, we can assume $\omega(f_1,f_2')=\alpha$ in $\Z_{cd}$. 

Now, let $e_2=\alpha^{-1}e_2', f_2=\alpha^{-1}f_2'$. Then $e_1,e_2, f_1,f_2$ is a symplectic basis and $x, \alpha^{-1}y$ are generators of $K$ of the desired form.
\end{proof}

\begin{lemma}
Let $E'=\pi(E\times\{0\}), F'=\pi(\{0\}\times F)$. % be contained in $X[d]$. 
Then $E'\cap F'=\pi(dE\times \{0\})=\pi(\{0\}\times dF)$. 
In particular, we have that $\pi^{-1}(E'\cap F')=dE+dF$.
\end{lemma}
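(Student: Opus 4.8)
The plan is to work throughout inside the ambient group $E\oplus F\cong\Z_{cd}^4$ and to exploit the explicit shape of $K$ provided by Lemma~\ref{anyK}, namely $K=\langle dbe_1+daf_1,\ dae_2-dbf_2\rangle$ for a suitable symplectic basis $e_1,e_2\in E$, $f_1,f_2\in F$. The key preliminary observation is that $K\subseteq(E\oplus F)[c]$, and that by the Corollary following Lemma~\ref{lemdivision} (applied with modulus $cd$ and $k=c$) one has $E[c]=dE$ and $F[c]=dF$; hence $K\subseteq dE\oplus dF$, so both coordinate projections of $K$ already land in $dE$, respectively $dF$.

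The heart of the argument is to upgrade this to an equality $p_E(K)=dE$ and $p_F(K)=dF$, where $p_E,p_F$ denote the projections onto the two summands. Writing a general element of $K$ as $\alpha(dbe_1+daf_1)+\beta(dae_2-dbf_2)$, its $E$-component is $d(b\alpha)e_1+d(a\beta)e_2$; since $da$ and $db$ have order exactly $c$ in $E$, only the residues of $\alpha,\beta$ modulo $c$ matter, and because $\gcd(a,c)=\gcd(b,c)=1$ the products $b\alpha$ and $a\beta$ run over all of $\Z_c$. Thus the $E$-components fill out $dE=E[c]$, and the same computation gives $p_F(K)=dF$. (One could instead argue abstractly that an isotropic subgroup of order $c^2$ in $(E\oplus F)[c]\cong\Z_c^4$ meeting each factor trivially must project isomorphically onto each factor, but invoking Lemma~\ref{anyK} is cleaner.)

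With this in hand the three claimed identities are immediate. For $E'\cap F'=\pi(dE\times\{0\})$: if $\pi(x,0)=\pi(0,y)$ then $(x,-y)\in K\subseteq dE\oplus dF$, forcing $x\in dE$; conversely, given $x\in dE$, by surjectivity of $p_E|_K$ choose $(x,y)\in K$, so that $\pi(x,0)=\pi(x,0)-\pi(x,y)=\pi(0,-y)$ lies in $F'$ as well as trivially in $E'$. Since $-y\in dF$ and the roles of $E$ and $F$ are symmetric, this also yields $\pi(dE\times\{0\})=\pi(\{0\}\times dF)$. Finally $\pi^{-1}(E'\cap F')=(dE\times\{0\})+K\subseteq dE\oplus dF$; and conversely any $(x,y)$ with $x\in dE$, $y\in dF$ can be written $(x-x',0)+(x',y)$ with $(x',y)\in K$ chosen via surjectivity of $p_F|_K$, whence $\pi^{-1}(E'\cap F')=dE+dF$. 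The only genuinely non-formal step is the surjectivity of the projections of $K$, which is precisely where the pairwise coprimality of $a,b,c$ enters; everything else is diagram-chasing inside a single finite abelian group.
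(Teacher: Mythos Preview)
Your proof is correct and follows essentially the same route as the paper's: both arguments hinge on the fact that $K\subseteq dE\oplus dF$ together with the projections $p_E|_K$ and $p_F|_K$ being bijections onto $dE$ and $dF$. The only cosmetic difference is that you verify this surjectivity by explicit computation using the normal form of $K$ from Lemma~\ref{anyK}, whereas the paper simply asserts it (implicitly via the cardinality argument you mention parenthetically), and the paper finishes the identity $\pi^{-1}(E'\cap F')=dE+dF$ by an order count rather than your explicit decomposition.
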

\begin{proof}
Firstly, note that for every $dx\in dE$ there exists a unique $dy\in dF$ such that $dx+dy\in K$ and all elements of $K$ are of this form. 

Hence, for any $P'=Q'\in E'\cap F'$, we have $P'-Q'=0\in X$, so there exist $P\in E$ with $\pi(P)=P'$ and $Q\in F$ with $\pi(Q)=Q'$ such that $P-Q\in K$. Hence $P-Q=dx+dy$ so $P-dx=Q+dy=0$. hence $P'=\pi(dx+0)=Q'=\pi(0-dy)$.

On the other hand if $P'=\pi(dx+0)$ then there exist $dy\in dF$, such that $dx+dy\in K$, so $P'=\pi(dx+0)=\pi(0-dy)=Q'\in E'\cap F'$.

Note that by comparing the orders, one gets $\pi^{-1}(E'\cap F')=dE+dF$.
\end{proof}

Let $X[d]$ be the set of $d$-torsion points of $X=(E\oplus F)/K$. Note that $X[d]\cong\Z_d^4$. Now, we are interested in the set $\pi^{-1}(X[d])$ that will be denoted by $L$. From the definition, we can write $L=\{e+f: e\in E, f\in F, de+df\in K\}$. By $l$, we denote $l=gcd(c,d)$. Let $\tilde{c}=\frac cl$.
Note that $cd=\tc ld$.

\begin{lemma}
The set of cyclic groups $G'\subset X[d]$ of order $d$ such that $G'\cap E'\cap F'=\{0\}$ is 
%in one to one correspondence with 
dominated by the set of
cyclic subgroups $\tilde{G}\subset L$ of order $ld$ (via the map $\pi$).
\end{lemma}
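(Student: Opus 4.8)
The plan is to exhibit a surjection from the set of cyclic subgroups $\tilde{G}\subset L$ of order $ld$ onto the set of cyclic subgroups $G'\subset X[d]$ of order $d$ with $G'\cap E'\cap F'=\{0\}$, realized by the quotient map $\pi$. The natural candidate is $\tilde{G}\mapsto\pi(\tilde{G})$, so the first task is to verify this lands in the target set: if $\tilde{G}\subset L$ has order $ld$, then $\pi(\tilde{G})\subset\pi(L)=X[d]$, and since $\ker\pi|_L=K$ has order $c^2$ while $K\cap\langle\text{generator of }\tilde G\rangle$ is contained in the cyclic group $\tilde G$, the order of $\pi(\tilde G)$ is $ld/|\tilde G\cap K|$. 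I would show $|\tilde G\cap K|=l$ exactly (using that $K\cong\Z_c^2$ is killed by $c$ and that $\tilde G$ is cyclic of order $ld$, so $\tilde G\cap K=\tilde G[c]$ has order $\gcd(c,ld)=l$ once one checks $\tilde G[c]$ actually has that size — which follows because the generator has order $ld=\tilde c l d$ with $\tilde c=c/l$ coprime to... here one must be a little careful, so this is a point to nail down). Granting this, $\pi(\tilde G)$ has order $d$. For the intersection condition, $\pi^{-1}(E'\cap F')=dE+dF$ by the previous lemma, so $\pi(\tilde G)\cap E'\cap F'=\{0\}$ is equivalent to $\tilde G\cap(dE+dF)\subseteq K$; since $\tilde G\subset L$ and $L$ consists of $e+f$ with $de+df\in K$, one checks that $\tilde G\cap(dE+dF)$ coincides with $\tilde G\cap K$, giving the condition automatically.

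The heart of the matter is surjectivity: given $G'\subset X[d]$ cyclic of order $d$ with $G'\cap E'\cap F'=\{0\}$, I must produce a cyclic $\tilde G\subset L$ of order $ld$ with $\pi(\tilde G)=G'$. Let $\bar P$ generate $G'$ and pick any $P\in L$ with $\pi(P)=\bar P$. The subgroup $\langle P\rangle+K$ maps onto $G'$, but $\langle P\rangle$ itself need not be cyclic of the right order nor contained in a good cyclic group. The key observation is that $\pi^{-1}(G')$ is an extension of $G'\cong\Z_d$ by $K\cong\Z_c^2$, and I want to find inside it a cyclic subgroup of order $ld$ surjecting onto $G'$. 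I would argue as follows: $\pi^{-1}(G')\subset L$ is a finite abelian group of order $dc^2$; its image in $X$ is $\Z_d$, so it has a cyclic quotient of order $d$, hence (by the structure theorem, choosing a complement to $cd\cdot(\text{stuff})$) it contains an element $\tilde P$ whose image generates $G'$ and whose order is a multiple of $d$; intersecting with the fact that $\pi^{-1}(G')$ is killed by $cd$ and that the $K$-part is killed by $c$, one gets that $\langle\tilde P\rangle$ has order dividing $cd$ and surjecting onto $\Z_d$ forces the order to be $d\cdot e$ with $e\mid c$. The task is then to show one can choose $\tilde P$ so that $e=l$ exactly: $e$ cannot exceed $l=\gcd(c,d)$ because... (here I expect to use that $\tilde P$ has order $de$ inside a group killed by $\mathrm{lcm}$ of the relevant orders, combined with the precise shape of $K$ from Lemma \ref{anyK}), and $e$ can be taken to be at least $l$ by adjusting $\tilde P$ by a suitable element of $K$ of order $l$ lying appropriately. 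Then $\tilde G:=\langle\tilde P\rangle$ has order $ld$ and $\pi(\tilde G)=G'$.

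I would organize the surjectivity argument concretely using the explicit basis from Lemma \ref{anyK}: write $K=\langle dbe_1+daf_1,\ dae_2-dbf_2\rangle$ in a symplectic basis $e_1,e_2\in E$, $f_1,f_2\in F$, express a lift $P=\sum x_ie_i+\sum y_if_i\in L$ of $\bar P$, and use the defining relation $dP\in K$ to pin down congruences on the coordinates mod the orders; then the correction by an element of $K$ to boost the order of $\langle P\rangle$ from $d\cdot(\text{something})$ up to exactly $ld$ becomes an explicit (if slightly tedious) arithmetic adjustment with the coprimality $\gcd(a,b)=\gcd(a,c)=\gcd(b,c)=1$ and $l=\gcd(c,d)$ doing the bookkeeping. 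The main obstacle I anticipate is precisely this order computation — showing that the cyclic subgroup of $\pi^{-1}(G')$ one extracts has order \emph{exactly} $ld$ and not some proper divisor or multiple — because it requires tracking how the order of an element interacts with the quotient by $K$, and the answer $ld$ (rather than $d$ or $cd$) is exactly the subtle point that $\gcd(c,d)=l$ governs. Everything else (that $\pi(\tilde G)\subset X[d]$, the order count for $\pi(\tilde G)$, and the intersection condition via $\pi^{-1}(E'\cap F')=dE+dF$) is routine bookkeeping with the lemmas already proved.
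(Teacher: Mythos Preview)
Your approach is sound in outline but diverges from the paper's, especially on surjectivity. For the forward direction, the paper packages everything into the single claim that for a generator $P$ of $\tilde G$ one has $kP\in dE+dF$ only when $d\mid k$; since $K\subseteq dE+dF$, this simultaneously forces $\tilde G\cap K=\langle dP\rangle$ (of order $l$, giving $|\pi(\tilde G)|=d$) and $\tilde G\cap(dE+dF)\subseteq K$ (giving the trivial-intersection condition), so your separate treatment of the two conditions is correct but slightly less economical. For surjectivity, rather than your constructive plan of analysing $\pi^{-1}(G')$ with the structure theorem and then adjusting a lift by an element of $K$ to hit order exactly $ld$, the paper argues by contrapositive: pick any lift $P\in L$ of a generator of $G'$ and set $\tilde G=\langle P\rangle$; if $|\tilde G|<ld$ then one can write $P=k(e+f)$ for generators $e\in E$, $f\in F$ and some $k>1$ with $k\mid d$, whence $\tfrac{d}{k}P\in dE+dF$ while $\tfrac{d}{k}\pi(P)\neq 0$, so $\pi(\tilde G)\cap E'\cap F'\neq\{0\}$, contradicting the hypothesis on $G'$. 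Your route is more explicit and you are right to flag the order computation as the crux; the paper's contrapositive is shorter but shifts the work into justifying the divisibility form $P=k(e+f)$.
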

\begin{proof}
If $P\in \tilde{G}$ is a generator of order $ld$ then $kP\in dE+dF$ only for $k$ such that $d|k$, so $\pi(\tilde{G})$ is a desired group.
On the other hand, if $|\tilde{G}|< ld$, hence $P\in \tilde{G}$ is a generator of order less than $ld$ then we can write $P=k(e+f)$ for some generators $e\in E$ and $f\in F$ and $k>1, k|d$. Hence, one gets $\frac d{k}P\in dE+dF$. Now, if $\pi(\tilde{G})$ is of order $d$, then $\frac d{k} \pi(P)\neq 0$ and we get that $\pi(\tilde{G})\cap E'\cap F'\neq \{0\}$.
\end{proof}

\begin{lemma}\label{technical1}
For any $K$ as above and $G$ that satisfy $|G|=ld,\ |G\cap K|=l,\ G\cap (dE+dF)=\{0\}$ and $|G\cap E|=a, |G\cap F|=b$ there exists a symplectic basis $e_1,e_2\in E,f_1,f_2\in F$ such that $G=\langle \tilde{c}be_1+\tilde{c}af_1\rangle,\ K=\langle dbe_1+daf_1, dae_2-dbf_2\rangle$.
\end{lemma}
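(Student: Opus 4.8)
The plan is to follow the same strategy as in the proof of Lemma \ref{anyK}, but now carrying along the extra cyclic group $G$. First I would record the model case: one checks directly that if $e_1,e_2\in E$, $f_1,f_2\in F$ is a symplectic basis, then $G_0=\langle \tc b e_1+\tc a f_1\rangle$ together with $K_0=\langle dbe_1+daf_1, dae_2-dbf_2\rangle$ satisfies all five listed conditions. Indeed $\tc b e_1+\tc a f_1$ has order $ld$ since $\gcd(a,b)=1$ and $cd=\tc l d$; the element $l(\tc be_1+\tc af_1)=cbe_1+caf_1$ has order $d$ and, because $c\cdot(\tc be_1+\tc af_1)=\tc cb e_1+\tc ca f_1$ hmm — rather, $d\cdot(\tc be_1+\tc af_1)=d\tc be_1+d\tc af_1$, and since $dbe_1+daf_1\in K_0$ and $\tc$ is a unit mod $c$ one gets $G_0\cap K_0$ has order $l$; the computations $|G_0\cap E|=a$, $|G_0\cap F|=b$, and $G_0\cap(dE+dF)=\{0\}$ are the same coprimality arguments as in Lemmas \ref{technical0} and \ref{anyK}.

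Next, given an arbitrary $G$ and $K$ with the stated properties, I would first bring $K$ into standard form using Lemma \ref{anyK}: there is a symplectic basis $e_1,e_2\in E$, $f_1,f_2\in F$ with $K=\langle dbe_1+daf_1, dae_2-dbf_2\rangle$. The remaining freedom is the subgroup of $Sp(E,\omega_E)\times Sp(F,\omega_F)$ (more precisely, of the full symplectic group of $E\oplus F$) that preserves $K$ in this exact presentation; I must use this freedom to move $G$ onto $G_0$. Let $P=e+f$ be a generator of $G$ with $e\in E$, $f\in F$. From $|G\cap E|=a$ and $|G\cap F|=b$ together with $G\cap(dE+dF)=\{0\}$ I extract, exactly as in Lemma \ref{technical0}, that $\tc b\mid$ the "$e$-part" and $\tc a\mid$ the "$f$-part" in an appropriate sense — concretely, $e$ must be a unit multiple of $\tc b$ times a generator $e_1'$ of $E$, and $f$ a unit multiple of $\tc a$ times a generator $f_1'$ of $F$, and after rescaling $P$ by a unit we may take $P=\tc b e_1'+\tc a f_1'$. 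Then I complete $e_1',f_1'$ to symplectic bases $e_1',e_2'$ of $E$ and $f_1',f_2'$ of $F$, and the point is to check that in these new bases $K$ still has the standard form $\langle dbe_1'+daf_1', dae_2'-dbf_2'\rangle$ — this is where the interaction between the $G$-condition and the $K$-condition is used: the relation $de+df\in K\cdot(\text{something})$ coming from $|G\cap K|=l$ forces the $E$- and $F$-components to be matched up the same way $K$ matches them, so the change-of-basis taking $(e_1,f_1)\mapsto(e_1',f_1')$ is the one under which $K_0$ is invariant. I would organize this by first choosing $e_1',f_1'$ from $G$, then arguing that the generator $dbe_1+daf_1$ of $K$ can be rewritten as $db e_1'+ da f_1'$ (up to a unit), and finally picking $e_2',f_2'$ to simultaneously symplectically complete the bases and realize the second generator of $K$.

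The main obstacle I expect is precisely the compatibility step: showing that one can choose the symplectic completions $e_2'\in E$, $f_2'\in F$ so that $G$ and $K$ are put into standard form by the \emph{same} basis, rather than by two basis changes that conflict. The argument will mirror the unit-$\alpha$ bookkeeping in Lemma \ref{anyK} (where $\omega(e_1,e_2')=\alpha$ is shown to equal $\omega(f_1,f_2')\bmod c$ via isotropy of $K$), but now there is the additional constraint that $G$ already pins down $e_1',f_1'$, so I have less freedom and must verify the $\bmod\,c$ matching still propagates. Once that is done, a final unit rescaling $e_2'=\alpha^{-1}e_2''$, $f_2'=\alpha^{-1}f_2''$ (which does not disturb $G$, since $G$ only involves $e_1',f_1'$) yields a symplectic basis in which both $G=\langle\tc be_1+\tc af_1\rangle$ and $K=\langle dbe_1+daf_1, dae_2-dbf_2\rangle$, as claimed.
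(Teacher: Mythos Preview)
Your outline has the right shape and you correctly isolate the crux as the ``compatibility step'': after writing a generator $P$ of $G$ as $\tc b e_1'+\tc a f_1'$, you need $db e_1'+da f_1'$ to lie in $K$. But you have not actually shown this, and it is not automatic. From $|G\cap K|=l$ you only get $dP=\tc(db e_1'+da f_1')\in K$; since $K\cong\Z_c^2$ and $\tc\mid c$, there are many elements of $K$ of order $c$ whose $\tc$-th multiple equals $dP$, and there is no reason the particular element $db e_1'+da f_1'$ (determined by your choice of lifts $e_1',f_1'$ via Lemma~\ref{lemdivision}) is one of them. Your remark that ``the relation $de+df\in K\cdot(\text{something})$ forces the components to be matched up the same way $K$ matches them'' is exactly the statement to be proved, not an argument for it. So as written the proposal has a genuine gap at the point you yourself flag.

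The paper closes this gap by a different construction that builds the compatibility in from the start. Rather than first normalising $K$ and then trying to drag $G$ along, it picks $u\in K$ of order $c$ with $\tc u\in G$ and a generator $v\in G$ (so $dv\in K$), proves that $u+v$ has order $cd$, and observes that then $\tc(u+v)$ generates $G$ while $d(u+v)\in K$ has order $c$. Writing $u+v=be_1+af_1$ (via Lemmas~\ref{technical0}, \ref{anyK} and a correction using Lemma~\ref{lemdivision} to force $e_1,f_1$ to have order $cd$) therefore normalises $G$ and the first generator of $K$ \emph{simultaneously}; the second generator of $K$ is then handled by the completion argument from Lemma~\ref{anyK}. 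If you want to rescue your route, the missing step is to show you can re-choose the lifts $e_1',f_1'$ (within the ambiguity allowed by Lemma~\ref{lemdivision}) so that $db e_1'+da f_1'$ lands in $K$; unwinding that is essentially equivalent to the paper's $u+v$ trick.
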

\begin{proof}
Let $u\in K$ be of order $c$, such that $\tc u\in G$ and let $v\in G$ be a generator of $G$ (of order $ld$, such that $dv\in K$). We claim that $u+v$ is of order $cd$. To see this, let $k(u+v)=0$ for the smallest positive $k$. This means that $ku=-kv\in G\cap K$. In order to have $ku\in G$ one needs $\tc|k$ and for $kv\in K$ one needs $d|k$. Since $\tc,d$ are coprime we have $\tc d|k$. Now, note that $c|\tc d$, so in fact, $ku=0$ and therefore $ld|k$.
Since $lcm(\tc,ld)=cd$ we get that $cd|k$ and so $u+v$ is of order $cd$.

Note that $d(u+v)\in K$ is of order $c$ and $\tc (u+v)\in G$ is a generator of $G$.

By the proof of Lemma \ref{technical0} (applied for order `ld'), we can write $v=\tc be_v+\tc af_v$ and by Lemma \ref{anyK} we can write $u=dbe_u+daf_u$ for some $e_u,e_v\in E, f_u,f_v\in F$.

Now, $u+v=be_1'+af_1'$ for $e_1'=de_u+\tc e_v, f_1'=df_u+\tc f_v$.
It may happen that $f_1'$ is not of order $cd$. However, since $|G\cap E|=a$ and is given by $\frac{cd}{|\langle af_1'\rangle|}$, we get that $af_1'$ is of order $\frac{cd}{a}$. By Lemma \ref{lemdivision}, we get that $af_1'=af_1$ for some $f_1$ of order $cd$. Analogously, we can find $e_1$ of order $cd$ such that $be_1'=be_1$ and therefore $u+v=be_1+af_1$ for $e_1\in E,\ f_1\in F$ of order $cd$. 
By the proof of Lemma \ref{anyK} one can extend $e_1,f_1$ symplectically to $e_2,f_2$ such that $K=\langle d(u+v), dae_2-dbf_2\rangle$, which proves the assertion.
\end{proof}

\begin{rem}
Note that, a posteriori, Lemma \ref{technical0} can be seen as a special case of Lemma \ref{technical1} when $c=\tc=1$, $K=\{0\}$ and $X=E\oplus F$.
\end{rem}

\section{Non-emptiness of $\mathcal{E}_d(m,n)$}\label{moduli}
We would like to describe the moduli of non-simple abelian surfaces $\mathcal{E}_d$. Our natural starting point is a moduli space of products of elliptic curves with a chosen $d$-torsion point which in the literature are sometimes called abelian surfaces with a root (cf. \cite{BLiso,HW}). %A crucial feature that gives us many components of the moduli space of non-simple surfaces is the fact that a $d$-torsion point on a product of elliptic curves is not necessarily a sum of $d$-torsion points of elliptic curves. %Lemma \ref{technical0}, interpreted correctly, shows that components of the moduli space can be parametrised by pairs of coprime divisors of $d$. That is why, as before, we fix coprime $a,b$ that are divisors of $d$. 

Since a non-simple principally polarised surface is only isogenous to a product, we also use a construction from \cite{Bor} that involves so-called allowable isotropic subgroups. 

In order to make this precise, for $N\in\mathbb{N}$, let $X(N)$ denote the modular curve associated to the congruence subgroup $\Gamma(N):=\ker(\mathrm{SL}(2,\mathbb{Z})\to\mathrm{SL}(2,\mathbb{Z}_N))$, i.e. the moduli space of triples $(E,e_1,e_2)$ where $e_1,e_2\in E$ is a symplectic basis of torsion points of order $N$. 

Now let $a,b,c\in\mathbb{N}$ be coprime integers with $a|d$, $b|d$ and $\tc=\frac{c}{\gcd(c,d)}$ and consider the moduli map
\[\Phi_{a,b,c}:X(cd)\times X(cd)\to\mathcal{A}_2(d)\]
\[((E,e_1,e_2),(F,f_1,f_2))\mapsto((E\times F)/\langle \tilde{c}be_1+\tilde{c}af_1, dbe_1+daf_1,  dae_2-dbf_2\rangle),H)\]
where $H$ is the polarisation that pulls back to $cd$ times the principal product polarisation on $E\times F$.                         
\begin{lemma}\label{phiabc}
The map $\Phi_{a,b,c}$ is a well-defined morphism and its image lies in $\mathcal{E}_d(\frac{cd}{a},\frac{cd}{b})$.
\end{lemma}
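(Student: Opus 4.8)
The plan is to verify three things: that the subgroup being quotiented is allowable (isotropic and meets each factor trivially in the appropriate sense), that the construction descends to a well-defined morphism of moduli spaces, and that the resulting surface genuinely contains complementary elliptic curves of the claimed exponents. First I would set $G_0=\langle \tilde{c}be_1+\tilde{c}af_1\rangle$ and $K_0=\langle dbe_1+daf_1,\,dae_2-dbf_2\rangle$ inside $(E\times F)[cd]\cong\Z_{cd}^4$, and observe that the group appearing in the definition of $\Phi_{a,b,c}$ is $G_0+K_0$. By Lemma \ref{anyK} (or rather its proof) $K_0$ is isotropic of type $\Z_c^2$ with $K_0\cap E=K_0\cap F=\{0\}$, and one checks directly that $\tilde{c}be_1+\tilde{c}af_1$ has order $ld$, pairs trivially with both generators of $K_0$ under the product form, and that $G_0+K_0$ is isotropic for the product of the principal polarisations scaled by $cd$; this is exactly the \emph{allowable isotropic subgroup} condition from \cite{Bor}, so the quotient $X=(E\times F)/(G_0+K_0)$ carries a polarisation $H$ pulling back to $cd$ times the product principal polarisation, and $H$ is of type $(1,d)$ by the index computation ($|G_0+K_0|=c^2d$, and $(cd)^2/(c^2d)=d$). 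This gives a point of $\mathcal{A}_2(d)$.

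Next I would check that $\Phi_{a,b,c}$ is well-defined and a morphism. The potential issue is dependence on the choice of representative $((E,e_1,e_2),(F,f_1,f_2))$ within its $\Gamma(cd)$-orbit: two symplectic bases of $E[cd]$ differ by an element of $\mathrm{SL}(2,\Z_{cd})$, but since $\Gamma(cd)$ acts trivially on torsion of order dividing $cd$, any representative of a point of $X(cd)$ gives literally the \emph{same} torsion data $(E,e_1,e_2)$ up to isomorphism, hence the same subgroup and the same quotient; so the map on points is well-defined, and it is a morphism because the whole construction (forming $E\times F$, taking a quotient by a flat family of finite subgroup schemes, and equipping it with the descended polarisation) is algebraic and works in families over $X(cd)\times X(cd)$. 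I would phrase this as: the assignment is induced by a morphism on the universal families, hence descends to the coarse (or stack) moduli spaces.

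Finally, for the exponent statement, let $E'=\pi(E\times\{0\})$ and $F'=\pi(\{0\}\times F)$ where $\pi$ is the quotient map. These are elliptic curves in $X$, and they are complementary: $A=E'+F'$ since $\pi$ is surjective on the product, and $E'\cap F'$ is finite (it is the image of $dE\times dF$ intersected appropriately, as in the lemma preceding Lemma \ref{technical1}). It remains to compute $\deg(H|_{E'})$ and $\deg(H|_{F'})$. Since $H$ pulls back along $\pi$ to $cd$ times the product principal polarisation, $\pi^*H|_{E\times\{0\}}=(cd)\theta_E$ has degree $cd$ on $E$, and $\pi|_{E\times\{0\}}\colon E\to E'$ is an isogeny of degree $|(G_0+K_0)\cap E|$. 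One computes this intersection: $K_0\cap E=\{0\}$, and an element $k(\tilde{c}be_1+\tilde{c}af_1)$ lies in $E$ (more precisely, after adding an element of $K_0$, lands in $E\times\{0\}$) exactly when the $F$-component is killed, which by coprimality of $a,b,c$ forces the order constraint giving $|(G_0+K_0)\cap(E\times\{0\})|=a$; hence $\deg(H|_{E'})=cd/a=\frac{cd}{a}$, and symmetrically $\deg(H|_{F'})=\frac{cd}{b}$, so $(X,H)\in\mathcal{E}_d(\frac{cd}{a},\frac{cd}{b})$.

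I expect the main obstacle to be the bookkeeping in the last step: carefully identifying which elements of $G_0+K_0$ project into $E\times\{0\}$ versus merely into a coset, and extracting the exact degrees $a$ and $b$ of the restricted isogenies. This is essentially a repackaging of the order computations already carried out in Lemmas \ref{technical0} and \ref{technical1} together with the projection-formula relation $\deg(\pi^*H|_{E\times\{0\}})=\deg(\pi|_{E\times\{0\}})\cdot\deg(H|_{E'})$, so I would aim to cite those lemmas rather than redo the arithmetic, and keep the verification of the allowable-subgroup axioms and the type $(1,d)$ claim as the only genuinely new computations.
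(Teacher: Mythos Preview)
Your approach is essentially the paper's, just reorganized: you take a single quotient by $G_0+K_0$, while the paper factors it as $(E\times F)\xrightarrow{\pi_c}(E\times F)/K_0=:X\xrightarrow{\pi_d}X/\langle\pi_c(P)\rangle=A$, first checking that $X$ carries a $(d,d)$ polarisation and then that the further cyclic quotient of order $d$ yields type $(1,d)$, both via \cite[Cor.~6.3.5]{BL}. Your exponent computation via the projection formula and your explicit treatment of well-definedness on moduli (which the paper leaves implicit) are fine and match the paper's argument that $\pi_c|_{E\times\{0\}}$ is injective while $\pi_d|_{\pi_c(E\times\{0\})}$ has degree $a$.

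The one substantive gap in your version is the type claim. The index computation $|G_0+K_0|=c^2d$ gives only $h^0(H)=(cd)^2/(c^2d)=d$, i.e.\ $d_1d_2=d$; it does not by itself force $(d_1,d_2)=(1,d)$ (for instance $d=4$ would a priori allow $(2,2)$). The paper's two-step factorization is precisely what settles this: from type $(d,d)$ a polarised cyclic quotient of order $d$ has type $(1,d)$. You should either insert that factorization, or else compute directly that $(G_0+K_0)^\perp/(G_0+K_0)\cong\Z_d^2$ inside $(E\times F)[cd]$ to pin down the type.
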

\begin{proof}
We start with $(E\times F, \mathcal{O}_E(cd)\boxtimes \mathcal{O}_F(cd))$ with a $(cd,cd)$ product polarisation.
We define $K=\langle dbe_1+daf_1,  dae_2-dbf_2\rangle$ and we see that $K$ is an allowable isotropic subgroup of the set of $c$-torsion points, so we can define a quotient $\pi_c: E\times F\to (E\times F)/K$ and by \cite[Cor 6.3.5]{BL} and Riemann-Roch we can compute that $X=(E\times F)/K$ has a $(d,d)$ polarisation and the quotient map $\pi_c$ is polarised. Now if $P=\tilde{c}be_1+\tilde{c}af_1$ then $\pi_c(P)\in X[d]$ and $A=X/\langle P\rangle$ is indeed (again by \cite[Cor 6.3.5]{BL}) a $(1,d)$-polarised abelian surface with the quotient map $\pi_d$ being a polarised isogeny. 

The last assertion follows from the fact that $\pi_c|_{E\times\{0\}}$ is injective, whereas $\pi_d|_{\pi_c(E\times\{0\})}$ is of order $a$ and analogously $(\pi_d\circ\pi_c)|_{\{0\}\times F}$ is of order $b$.
\end{proof}

%Now, we are ready to show the main result of the paper. %Recall from Theorem \ref{mainEF} that complementary exponents $m,n$ have to satisfy $gcd(m,n)=\frac{dp}{q}, lcm(m,n)=pd$ for some $q|d$ and $p$ coprime with $q$.
\begin{theorem}\label{irreducible}
Let $m,n,d$ be positive integers that satisfy Equation \ref{sufficient}.
%$gcd(m,n)=\frac{cd}{q}$ and $lcm(m,n)=cd$ for some $q|d$ and $c$ coprime with $q$.
Then, one can write $m=\frac{cd}{a},\ n=\frac{cd}{b}$ for some $a,b$ coprime divisors of $d$ and $c$ coprime to $a,b$ and the image of  $\Phi_{a,b,c}$ is contained in $\mathcal{E}_d(m,n)$. This shows that $\mathcal{E}_d(m,n)$ contains an irreducible component of dimension 2, hence it is non-empty.
\end{theorem}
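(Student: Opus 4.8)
The statement has two halves: first, that any element of $\mathcal{E}_d(m,n)$ arises in the image of some $\Phi_{a,b,c}$ for a suitable choice of $a,b,c$; second, that a single $\Phi_{a,b,c}$ suffices, so that $\mathcal{E}_d(m,n)$ is the continuous image of the irreducible surface $X(cd)\times X(cd)$ and hence irreducible of dimension $2$. The first half is essentially the content of Section \ref{prelim}: starting from $(A,\caL)\in\mathcal{E}_d(m,n)$ with complementary elliptic curves $E,F$, pull back along the cyclic degree-$d$ isogeny $f\colon X\to A$ to the principally polarised $X$, so that $E'=f^{-1}(E)^0$, $F'=f^{-1}(F)^0$ are complementary on $X$ with common exponent $c$, and $a=\deg f|_{E'}$, $b=\deg f|_{F'}$. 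By Lemma \ref{lemred} we may assume $\ker f\cap E'\cap F'=\{0\}$, and then as in Proposition \ref{propmainEF} the integers $a,b,c$ are pairwise coprime, $a,b\mid d$, and $m=\frac{cd}{a}$, $n=\frac{cd}{b}$. I would also note that the hypothesis that $m,n,d$ satisfy Equation \eqref{sufficient} is exactly what guarantees that such a triple $(a,b,c)$ exists abstractly (write $m=cd/a$, $n=cd/b$ with $\gcd(m,n)=cd/(ab)$ and check the arithmetic), so the parametrisation is non-vacuous.

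The heart of the argument is reconstructing $X$ — and hence $A$ — from $E',F'$ together with the kernel data, and showing it lies in the image of $\Phi_{a,b,c}$. The addition map $E'\times F'\to X$ is an isogeny whose kernel $K$ is an allowable isotropic subgroup isomorphic to $\Z_c^2$ with $K\cap E'=K\cap F'=\{0\}$ (this is the setup of Section \ref{grouptheoryresults} with the roles of $E,F$ there played by suitable $cd$-torsion lattices). The point $P\in X[d]$ generating $\ker(X\to A)$ pulls back to a cyclic subgroup $\tilde G\subset L$ of order $ld$, where $l=\gcd(c,d)$. Now apply Lemma \ref{technical1}: there is a symplectic basis $e_1,e_2\in E'$, $f_1,f_2\in F'$ (of $cd$-torsion) with $\tilde G=\langle \tc b e_1+\tc a f_1\rangle$ and $K=\langle d b e_1+d a f_1,\ d a e_2-d b f_2\rangle$. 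But this is precisely the subgroup quotiented out in the definition of $\Phi_{a,b,c}$, applied to the points $((E',e_1,e_2),(F',f_1,f_2))\in X(cd)\times X(cd)$. Hence $(A,\caL)=\Phi_{a,b,c}((E',e_1,e_2),(F',f_1,f_2))$, proving surjectivity onto $\mathcal{E}_d(m,n)$.

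For the conclusion: $X(cd)$ is an irreducible (affine) curve — the modular curve for $\Gamma(cd)$ — so $X(cd)\times X(cd)$ is an irreducible surface, and therefore $\mathcal{E}_d(m,n)$, being its image under the morphism $\Phi_{a,b,c}$, is irreducible and of dimension at most $2$. Non-emptiness follows from surjectivity together with the existence of a triple $(a,b,c)$, or directly by exhibiting one product $(E\times F)/K$ with $E,F$ generic. That the dimension is exactly $2$ can be seen by a generic-fibre count: the fibre of $\Phi_{a,b,c}$ over a general point is finite (different level structures on the same pair of elliptic curves can give the same quotient, but only finitely many), so $\dim\mathcal{E}_d(m,n)=\dim X(cd)\times X(cd)=2$; alternatively one reads it off from the explicit period description in Corollary \ref{periods}, where the locus is cut out by one linear equation in the $3$-dimensional Siegel upper half-space $\mathfrak{H}_2$.

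\textbf{Main obstacle.} The delicate point is the bookkeeping that identifies the two pieces of kernel data on $X$ — the allowable isotropic $K$ coming from the isogeny $E'\times F'\to X$ and the cyclic $\tilde G$ coming from $X\to A$ — with the single explicit subgroup appearing in $\Phi_{a,b,c}$, simultaneously in one symplectic basis. This is exactly what Lemma \ref{technical1} is engineered to do, but invoking it requires first verifying its hypotheses for the geometrically-arising $K$ and $G$: that $|G|=ld$, $|G\cap K|=l$, $G\cap(dE+dF)=\{0\}$, and $|G\cap E|=a$, $|G\cap F|=b$. The orders $a,b$ come from $\deg f|_{E'}$, $\deg f|_{F'}$ as recorded above; the conditions $|G\cap K|=l$ and $G\cap(dE+dF)=\{0\}$ translate the requirement that $P$ has order exactly $d$ in $X[d]$ and generates a subgroup meeting $E'\cap F'$ trivially — which is the reduction achieved by Lemma \ref{lemred}. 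Getting all of these to line up cleanly, and correctly matching the $cd$-torsion lattices of $E',F'$ with the abstract $\Z_{cd}^2$'s of Section \ref{grouptheoryresults}, is where the real care is needed; everything after that is a formal consequence of the irreducibility of $X(cd)$.
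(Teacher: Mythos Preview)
Your proposal is correct and follows essentially the same route as the paper's own proof: reduce via the cyclic degree-$d$ isogeny to the principally polarised $X$, use Lemma~\ref{lemred} to arrange $\ker f\cap E'\cap F'=\{0\}$ so that $a,b,c$ are pairwise coprime, realise $X$ as $(E'\times F')/K$ for an allowable isotropic $K$, and then invoke Lemma~\ref{technical1} to put $K$ and the cyclic kernel of $X\to A$ simultaneously into the standard form defining $\Phi_{a,b,c}$. Your identification of the main obstacle---verifying the hypotheses of Lemma~\ref{technical1} for the geometrically-arising $K$ and $G$---is exactly right, and your treatment of the arithmetic (existence of the triple $(a,b,c)$ from Equation~\eqref{sufficient}) and of the dimension count is slightly more explicit than the paper's, which simply asserts both.
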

\begin{proof} 
By dividing Equation \ref{sufficient} by $gcd(m,n)$ one gets $lcm(m,n)|gcd(m,n)d$, hence by elementary number theory we can write $lcm(m,n)=gcd(m,n)\cdot s$ for some $s|d$ and so $m=gcd(m,n)\cdot b,\ n=gcd(m,n)\cdot a$ for some $a,b$ that are coprime and satisfy $ab=s$. In particular $a,b$ are divisors of $s$ and hence divisors of $d$. Substituting $m,n$ we get $gcd(m,n,d)=\frac{d}{ab}$, so $m=\frac{cd}{a},\ n=\frac{cd}{b}$ for $c$ coprime to $a,b$.
Moreover, the triple $(a,b,c)$ of pairwise coprime numbers is uniquely determined by $m,n$ and all such triples $(a,b,c)$ appear in this way.
Now, Lemma \ref{phiabc} shows that $\mathcal{E}_d(m,n)$ is non-empty.\end{proof}

 The following corollary computes possible complementary exponents if we fix a pair $d, n$.

\begin{corollary}\label{corposmn}
Let $n,d\in\N$. We can write uniquely $n=\frac{cd}{a}$, %for some $x|d$ and $p,x$ coprime.
where $a=\frac{d}{gcd(n,d)}$ and $c=\frac{n}{gcd(d,n)}=\frac{lcm(n,d)}{d}$. Let $s$ be a divisor of $d$ such that $a|s$ and $a,\frac{s}{a}, c$ are pairwise coprime. Note that $s$ always exists (e.g. $s=a$). Denote by $b=\frac{s}{a}$.

Then %$c,q$ satisfy Theorem \ref{irreducible} and %since $\frac{pd}{q}|n$ and $n|pd$  
$m=\frac{cd}{b}$ satisfies Theorem \ref{irreducible} and is a complementary type to $n$. In particular $\mathcal{E}_d(n)$ is non-empty and the number of possible complementary exponents equals $\mu(n,d)$ (see Lemma \ref{phind}).
\end{corollary}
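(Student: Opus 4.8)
The plan is to chase the arithmetic that was already set up in Lemma~\ref{phind} and Proposition~\ref{propmainEF}, and then quote Theorem~\ref{irreducible} for the geometric content. First I would verify that the given decomposition $n=\frac{cd}{a}$ with $a=\frac{d}{\gcd(n,d)}$ and $c=\frac{n}{\gcd(d,n)}$ is consistent: since $\gcd(a,c)=\gcd\!\left(\frac{d}{\gcd(n,d)},\frac{n}{\gcd(n,d)}\right)=1$ by the standard property of gcd's, and $a\mid d$ by construction, the triple makes sense. Next, for any divisor $q\mid d$ with $a\mid q$ such that $a,\ \frac{q}{a},\ c$ are pairwise coprime, setting $b=\frac{q}{a}$ gives coprime divisors $a,b$ of $d$ with $c$ coprime to both, so by the equivalence established in the proof of Theorem~\ref{irreducible} the pair $m=\frac{cd}{b}$, $n=\frac{cd}{a}$ satisfies Equation~\eqref{sufficient}; hence $\mathcal{E}_d(m,n)\neq\varnothing$ and $m$ is a genuine complementary type to $n$. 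Taking $q=a$ (so $b=1$, $m=cd=\mathrm{lcm}(n,d)$) shows such a $q$ always exists, so $\mathcal{E}_d(n)\supseteq\mathcal{E}_d(m,n)\neq\varnothing$.

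For the count of irreducible components, I would argue that $\mathcal{E}_d(n)=\bigcup_m\mathcal{E}_d(m,n)$ (this union was recorded in Section~\ref{prelim}), where $m$ ranges over solutions of Equation~\eqref{sufficient} for the given pair $(n,d)$. By Theorem~\ref{irreducible} each nonempty $\mathcal{E}_d(m,n)$ is irreducible of dimension $2$. It remains to check that these pieces are genuinely distinct for distinct $m$ and that none is contained in another; since they all have the same dimension $2$ and an irreducible variety cannot be a proper subset of another of the same dimension, the number of irreducible components of $\mathcal{E}_d(n)$ is exactly the number of valid $m$, which by Lemma~\ref{phind} equals $\mu(n,d)$. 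I would also note that two distinct valid exponents $m\neq m'$ give $\mathcal{E}_d(m,n)\neq\mathcal{E}_d(m',n)$ because a generic point of $\mathcal{E}_d(m,n)$ has Picard number exactly $2$, so its only pair of complementary elliptic curves (up to the obvious ambiguity) has exponents $\{m,n\}$, pinning down $m$.

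The main obstacle I anticipate is precisely this last point: ruling out coincidences among the loci $\mathcal{E}_d(m,n)$ for different $m$. One has to be careful because, as the authors themselves emphasize in the introduction (Corollary~\ref{examples}), different \emph{singular equations} can define the same component — so the danger of an unexpected inclusion is real. The clean way around it is the Picard-number argument: on a general member $(A,\mathcal{L})$ of $\mathcal{E}_d(m,n)$ one has $\rho(A)=2$, hence $\mathrm{End}(A)\otimes\mathbb{Q}$ contains exactly one nontrivial symmetric idempotent up to complementation, so $E$ and $F$ are determined and their exponents $(m,n)$ (as an unordered pair) are an invariant of the point; since $n$ is fixed, $m$ is determined. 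This forces distinct $m$ to give loci with disjoint general points, hence distinct components, completing the enumeration. The remaining verifications — existence of $q$, the pairwise-coprimality bookkeeping — are routine given Lemma~\ref{phind}.
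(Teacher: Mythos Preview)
Your proposal is correct and matches the paper's (implicit) approach: the paper states Corollary~\ref{corposmn} without proof, treating it as an immediate consequence of Theorem~\ref{irreducible} together with Lemma~\ref{phind}, and your argument supplies exactly that derivation. The one point you rightly flag as needing care---that distinct admissible $m$ yield distinct components $\mathcal{E}_d(m,n)$---is not addressed explicitly in the paper either; your generic-Picard-number-$2$ argument is the standard and correct way to close this, and without it the count $\mu(n,d)$ would only be an upper bound.
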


\begin{rem}
In extreme cases, we have the following. If $d^2|n$ then the only possibility is $a=b=1$ and both exponents equal $n$.
If $(d,n)=1$, then $c=n,\ s=a=d$, so $b=1$ and we have exponents $n, dn$.
On the other hand, if $n=cd$ for some $c$ coprime to $d$ then $a=1$ and there are as many complementary types as divisors of $d$ (of the form $cb$ for $b|d$). \end{rem}

%For this reason, we only need to show that a non-simple abelian surface is in the image of some $\Phi_{a,b,c}$. Fortunately, this is a straightforward corollary of Lemma \ref{lemred}, Poincare's Irreducibility Theorem and Lemma \ref{technical1}. 

\subsubsection*{Polarised products of elliptic curves}
In the case of principally polarised abelian surfaces, the polarised product of two elliptic curves is a distinguished irreducible divisor that is the complement of the locus of Jacobians of smooth genus 2 curves.

We would like to describe the locus of polarised products of two elliptic curves also in the $(1,d)$ polarised case.
%We have the following proposition.

Corollary \cite[5.3.6]{BL} asserts that a polarised abelian surface $A$ is a polarised product of two complementary elliptic curves E,F if and only if the addition map $g:E\times F\to A$ is a (polarised) isomorphism.
 Then, Remark \ref{remproduct} yields the following corollary.
\begin{corollary}
A $(1,d)$ polarised abelian surface $A$ is a polarised product of two elliptic curves E,F of exponents $m,n$ if and only if $m,n$ are coprime and $mn=d$.

Note that every such pair $m,n$ satisfies Equation \ref{sufficient}.
\end{corollary}

Now, the loci $\{(E\times F, \cO_E(m)\boxtimes\cO_F(n)): E,F \text{ elliptic curves}\}$ are all irreducible components of the locus of products of elliptic curves. The number of such components is equal to $2^{\omega(d)-1}$ where $\omega(d)$ is the function counting all prime factors of $d$.

\section{Possible number of components}\label{manycomp}
Since we have already answered the question of when $\mathcal{E}_d(m,n)$ is non-empty, from now on, we will discard the assumption that $a,b,c$ are pairwise coprime.  

However, in many cases, the moduli space $\mathcal{E}_d(m,n)=im(\Phi_{a,b,c})$ is in fact irreducible. To show this, we start with the following lemma that may be called reduction lemma.
\begin{lemma}\label{lemredd}
Let $X$ be a principally polarised abelian variety, $E',F'\subset X$ complementary curves of exponent $c$ and let $f:X\to A$, be a cyclic isogeny of degree $d$. Assume $\ker f\cap E'\cap F'= \langle x\rangle$ for some point $x$. Then there exists another principally polarised abelian variety $Y$ with elliptic curves $E''=E'/\langle x\rangle$ and $F''=F'/\langle x\rangle$ and a map $h:Y\to A$ such that $\ker h\cap E''\cap F''= \{0\}$.
\end{lemma}
\begin{proof}
In the proof, we abuse notation by writing $E'$ both as an abstract curve and its image in $X$. In the same way, $x$ is both a point of $X$ that is in the image of $E'$ and a point of $E'$.

For any map $g$, let $\overline{g}$ denote the induced map on the quotient. Let $q$ denote the quotient map $X\to X/\langle x\rangle$, and let $g$ be the addition map. We have the following diagram of polarised isogenies:
\begin{equation} \label{reductionshort} 
\xymatrix@R=1cm@C=1.5cm{
 E'\times F' \ar[r]^(.55){g} %\ar[d]_{quotient}  
 & X \ar[r]^(.45){f} \ar[dr]_(.4){q}  
 &A 
 \\
%E'/x\times F'/x  \ar[r]^(.5){+} &Y  \ar[r]^(.5){g} 
&& X/\langle x\rangle  \ar[u]_(.4){\overline{f}}
}
\end{equation}
Since $\ker(g\circ q)$ contains the group generated by $(x,0)$ and $(0,x)$ in $E'\times F'$ we get the quotient map and the extended diagram
\begin{equation} \label{reduction} 
\xymatrix@R=1cm@C=1.5cm{
 E'\times F' \ar[r]^(.55){g} \ar[d]_{quotient}  & X \ar[r]^(.45){f} \ar[dr]_(.4){q}  
 &A 
 \\
E'/\langle x\rangle\times F'/\langle x\rangle
\ar[rr]^(.6){\overline{g\circ q}} &%Y  \ar[r]^(.5){g} 
& X/\langle x\rangle  \ar[u]_(.4){\overline{f}}
}
\end{equation}
The polarisation types are as follows, where $l$ is the order of $x$:
\begin{equation} \label{reductionpol} 
\xymatrix@R=1cm@C=1.5cm{
 (cd,cd) \ar[r]^(.55){g} \ar[d]_{quotient}  & (d,d) \ar[r]^(.45){f} \ar[dr]_(.4){q}  
 &(1,d) 
 \\
(\frac{c}{l}d,\frac {c}{l}d)  \ar[rr]^(.6){\overline{g\circ q}} &%Y  \ar[r]^(.5){g} 
& (\frac dl,d)  \ar[u]_(.4){\overline{f}}
}
\end{equation} Since $\overline{f}\circ\overline{g\circ q}$ is a polarised isogeny of order $\frac{c^2d}{l^2}$, we can factor it via the map $h'$ of order $\frac{c^2}{l^2}$ to a surface $(Y,d\theta_Y)$ and $h:Y\to A$ that is an isogeny of order $d$. Moreover, the restrictions of $h'$ to $E''$ and to $F''$ are injective, we get that $h'$ is addition and $E''$, $F''$ in $Y$ are elliptic curves that are complementary to each other.
\end{proof}
\begin{theorem}\label{squarefreeirr}
If $d$ is square free and $m,n,d$ satisfy Equation \ref{sufficient} then $\mathcal{E}_d(m,n)=im(\Phi_{a,b,c})$ is an irreducible surface in $\mathcal{A}_2(d)$.
\end{theorem}
\begin{proof}
Let $A\in\mathcal{E}_d(m,n)$. Let $f:X\ra A$ be a cyclic isogeny from a principally polarised abelian surface. By Lemma \ref{lemredd} we can construct an isogeny $h$ and since $d$ is square free, we can assume $h$ is cyclic. By construction $\ker h\cap E''\cap F''= \{0\}$ which means that $a,b$ are coprime, so $A\in im(\Phi_{a,b,c})$.
\end{proof}

\begin{corollary}
For a square-free positive integer $d$, the locus of non-simple polarised abelian surfaces in $\mathcal{A}_2(d)$ is equal to
\[\mathcal{E}_d=\bigcup_{m,n\in\mathbb{N}}\mathcal{E}_d(m,n)=\bigcup_{\stackrel{a|d,b|d,c}{(a,b)=(a,c)=(b,c)=1}}\mathrm{Im}(\Phi_{a,b,c}),\]
where the components are irreducible (or empty).
\end{corollary}

\subsection{Many components}
The reduction lemma does not assert that $h$ is cyclic. Now, we would like to provide an explicit example that shows that $\mathcal{E}_9(9,9)$ has at least two components.
We will use the following notation (that appears in Lemma \ref{lemxy}). Let $e_1,e_2$ denote canonical basis of $\C^2$ and $f_1,f_2$ be the first and the second column of the period matrix. Then  $\omega(e_1,f_1)=\omega(e_2,f_2)=1$ is the alternating form that defines a polarisation.
\begin{example}\label{09}
Let $A_{0,9}=\C^2/\Lambda$  with the lattice  $\Lambda$ generated by the columns of $$\left[\begin{array}{cccc}z_1&z_2&1&0  \\
    z_2 &9z_2&0&9 
\end{array}\right]$$
for $z_1,z_2$ such that the imaginary part of the period matrix is positive definite.

Lemma \ref{lemxy} shows that there exist elliptic curves $E_0,\ E_9$ that are embedded in $A_{0,9}$ via maps of analytic representations $[1\ 9]$ with the primitive lattice vectors $f_2, e_1+9e_2$ and $[1\ 0]$ with the primitive lattice vectors $9f_1-f_2, e_1$. In particular, exponents of both curves are equal to 9, so $A_{0,9}\in \mathcal{E}_9(9,9)$.
One can also compute that $E_0\cap E_9=<\frac{1}{9}f_2>\simeq\Z_9$.
\end{example}

\begin{example}\label{36}
Let $A_{3,6}=\C^2/\Lambda$  with the lattice  $\Lambda$ generated by the columns of $$\left[\begin{array}{cccc}z_1&z_2&1&0  \\
    z_2 &9z_2-18z_1&0&9 
\end{array}\right]$$
for $z_1,z_2$ such that the imaginary part of the period matrix is positive definite.

Now, we can explicitly compute that there exist elliptic curves $E_3,\ E_6$ that are embedded in $A_{3,6}$ via maps of analytic representations $[1\ 3]$ with the primitive lattice vectors $f_2-6f_1, 3e_1+9e_2$ and $[1\ 6]$ with the primitive lattice vectors $f_2-3f_1, 3e_1+18e_2$. In particular, exponents of both curves are equal to 9, so $A_{3,6}\in \mathcal{E}_9(9,9)$.
Note that $E_3\cap E_6=<3e_2,\frac{f_2}{3}>\simeq\Z_3\times\Z_3$.
\end{example}

\begin{corollary}
Examples \ref{09} and \ref{36} explicitly show that the locus $\mathcal{E}_9(9,9)$ has at least two irreducible components.
\end{corollary}
An explanation of this phenomenon is as follows. Example \ref{09} is constructed by taking the product $E\times F$ (with the polarisation type $(9,9)$) and dividing by an allowable cyclic subgroup of order $9$.

On the other hand, in Example \ref{09} one takes the product $E\times F$ (with the polarisation type $(9,9)$) and divides by an allowable $\Z_3\times\Z_3$ subgroup that is non-isotropic with respect to $(3,3)$ type but trivially isotropic for type $(9,9)$. Such a choice forces the quotient surface to be $(1,9)$ polarised.
One can check that if one wants to find a cyclic isogeny $f:X\ra A_{3,6}$ from a principally polarised $X$ then $X$ will contain elliptic curves of exponent 3 and reduction lemma will give you $h$ that will essentially be a quotient by $\Z_3\times\Z_3$ and hence it will never be cyclic in this case.

Now, we would like to generalise the above phenomenon in the following lemma.
\begin{lemma}\label{lemany}
Let $p=p_1\cdot\ldots\cdot p_k$ for some prime numbers $p_i>2$ and let us assume that $d$ is an odd number satisfying $p^2|d$.
Let $X=E\times F$. Then, for any divisor $k|p$ there exists an allowable subgroup $G$ that is isomorphic to $\Z_k\oplus \Z_{\frac{d}{k}}$ such that $X/G$ is of type $(1,d)$.
\end{lemma}
\begin{proof}
Let $e_i\in E, f_i\in F$ be such that $e_1,e_2, f_1, f_2\in X[d]$ is a symplectic basis of $X[d]$. Let $g_1=2e_1+f_1, g_2=e_2-f_2$. Since $d$ is odd, we see that $<g_1>\cap F=\{0\}$.
Now, let $G=<kg_1,\frac{d}{k}g_2>$. Since $g_1,g_2$ are linearly independent, we get that $G\simeq \Z_k\oplus \Z_{\frac{d}{k}}$ and $G\cap E=G\cap F=\{0\}$. Moreover, since $\omega(g_1,g_2)=1 \mod d$, we get that $\omega(kg_1, \frac{d}{k}g_2)=d$. In particular, $G$ is isotropic with respect to $(d, d)$ polarisation. Now, consider $(d',d')$ type and denote by $l=\frac{d}{d'}>1$. Then $le_1,le_2, lf_1, lf_2\in X[d']$ is a symplectic basis with $\omega'=\frac{1}{l^2}\omega$ being the symplectic form. Then, either $l\nmid k$ and in this case $kg_1\notin X[d']$, so $G$ is not a subgroup of the kernel of the polarising isogeny or $l\mid k$. In this case $\omega'(kg_1, \frac{d}{k}g_2)=\frac{d}{l^2}=\frac{d'}{l}\neq 0 \mod d'$, hence $G$ is not isotropic with respect to $(d',d')$ polarisation type.

The fact that $X/G$ is of type $(1,d)$ follows from \cite[Corollary 6.3.5]{BL}. Assuming by contradiction that $X/G$ is of type $(l,\frac{d}{l})$ for some $l>1$ we could have found a polarisation of type $(1,\frac{d}{l^2})$ on $X/G$ and pull it back to $X$ to get a polarisation of type $(\frac{d}{l},\frac{d}{l})$ for which $G$ would have to be isotropic.
\end{proof}

\begin{rem}
Note that we cannot extend the statement of Lemma \ref{lemany} to $p=2$ and $d=4$. It is because if $G\cong \Z_2\times \Z_2$ satisfies the condition $G\cap E[2]=G\cap F[2]=\{0\}$ then you can find a symplectic basis $e_1,e_2,f_1,f_2$ such that $G=<e_1+f_1,e_2+f_2>$ and hence $G$ is always isotropic with respect to $(2,2)$ polarisation type. That is why we assume $p_i>2$.
\end{rem}

\begin{theorem}\label{thmmany}
If $p^2|d$ for some odd $d$, then the locus $\mathcal{E}_d(d,d)$ has at least $\sigma(p)$ irreducible components, where $\sigma(p)$ is the number of divisors of $p$.
\end{theorem}
\begin{proof}
A general surface $A\in \mathcal{E}_d(d,d)$ contains precisely two elliptic curves $E,F$, so one can consider a (rational) map $A\mapsto E\cap F\in A[d]$ that is continuous. In particular, the groups $E\cap F$ are abstractly isomorphic for any general abelian surface in the same irreducible component of $\mathcal{E}_d(d,d)$. 

On the other hand, one can easily check that in Lemma \ref{lemany} the surfaces $(E\times F)/G$ have the property that $E\cap F\simeq G$ that are non-isomorphic to each other for various $k|p$.
\end{proof}

\subsection{A natural question}
We have shown a lower bound for the number of irreducible components of $\mathcal{E}_d(d,d)$.
Let us observe the following fact.
\begin{proposition}\label{propint}
Let $A$ be an abelian surface and $E,F\subset A$ elliptic curves. The only possible groups that appear as intersections of $E\cap F$ are of the form $\Z_q\oplus \Z_{q'}$ for $q|q'$. 
\end{proposition}
\begin{proof}
Let $f:E\times F\ra A$ be an addition isogeny. Then $f(E)\cap f(F)\simeq ker(f)$. It is because $f(x,0)=f(0,-y)\iff f(x,y)=0$.

Now, the result follows from the fact that $ker(f)$ is an allowable subgroup of $E[k]\times F[k]\simeq \Z_k^2\oplus\Z_k^2$, hence it cannot be of the form $\Z_q\oplus \Z_{q'}\oplus \Z_{q''}$ for $q|q', q'|q''$.
\end{proof}

Proposition \ref{propint} shows that the intersection of elliptic curves should be of the form $\Z_k\otimes \Z_{\frac dk}$ and all such groups can appear. We have shown that if $d$ is square free, then there is one possible intersection group and only one irreducible component. Therefore one can ask whether fixing an intersection group gives in fact an irreducible component of $\mathcal{E}_d(d,d)$ in general, hence we state the following question.

\begin{question}\label{qirr}
Does the number of irreducible components of $\mathcal{E}_d(d,d)$ equal $\sigma(p)$, where $p$ is the maximal odd number satisfying $p^2|d$?  
\end{question}

\section{Equations for $\mathcal{E}_d(m,n)$ for square free $d$}\label{eqations}
By Theorem \ref{squarefreeirr}, for a square free $d$, the moduli space $\mathcal{E}_d(m,n)$ is a surface in $\mathcal{A}_d$, so one can compute a possible equation for it in the Siegel space in the following way:

Using \cite[Prop 4.4]{ALR} and \cite[Prop 5.4]{K} one sees that preimages of $\mathcal{E}_d$ in $$\mathfrak{h}_2=\left\{Z=\left[\begin{array}{cc}z_1&z_2  \\
    z_2 &z_3 
\end{array}\right]: Im(Z)>0\right\}$$ satisfy singular relations $da_1z_1+a_2z_2+a_3z_3+a_4(z_2^2-z_1z_3)+da_5=0$ for some integers $(a_1,a_2,a_3,a_4,a_5)$ without a common divisor satisfying $\Delta=a_2^2-4da_1a_3-4da_4a_5=p^2$ for some integer $p$.
Note that \cite[Chapter IX]{vdG} uses slightly different notation that results in a different formula.

The equations come from the fact that being non-simple is invariant under isogenies. Hence, one can consider a map that takes an extended period matrix (i.e. the matrix whose columns consist of a symplectic basic for the lattice) $[Z\ D]$ to $[Z\ id_2]$ and use the pullback of a differential form to get equations of the desired form.  

Before proceeding, we would like to set up a (usual) convention that $lcm(0,t)=0$, $gcd(0,t)=t$ and more importantly $\frac{lcm(0,t)}{0}=1$ because $=\frac{lcm(0,t)}{0}=\frac{t}{gcd(0,t)}$.

We start with the following lemma.
\begin{lemma}\label{lemxy} Fix $x,y\in\Z$ satisfying $x\neq y$ and $d|xy$. 
Let $A_{x,y,z_1,z_2}=\C^2/\Lambda$ with $\Lambda$ generated by the columns of $$\left[\begin{array}{cccc}z_1&z_2&1&0  \\
    z_2 &(x+y)z_2-xyz_1&0&d 
\end{array}\right]$$
for $z_1,z_2$ such that the imaginary part of the period matrix is positive definite. Then there exist elliptic curves $E_x,\ E_y$ that are embedded in $A_{x,y,z_1,z_2}$ via maps of analytic representations $[1\ x]$ with the primitive lattice vectors $f_2-yf_1, \frac{lcm(d,x)}{x}e_1+lcm(d,x)e_2$ and $[1\ y]$ with the primitive lattice vectors $f_2-xf_1, \frac{lcm(d,y)}ye_1+lcm(d,y)e_2$. In particular, the exponents are equal to $|lcm(d,x)-\frac{ylcm(d,x)}{x}|$ and $|lcm(d,y)-\frac{xlcm(d,y)}{y}|$.
\end{lemma}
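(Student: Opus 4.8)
The plan is to exhibit explicit analytic maps and verify directly that they land inside $A_{x,y}$, then compute the exponents by restricting the polarisation. First I would set up notation: write $e_1,e_2$ for the first two columns of the period matrix (so $e_1 = (z_1,z_2)^t$, $e_2 = (z_2,(x+y)z_2-xyz_1)^t$) and $f_1,f_2$ for the last two columns $(1,0)^t$, $(0,d)^t$. The key algebraic identity to notice is that the second row of the period matrix is exactly $x+y$ times $z_2$ minus $xy$ times $z_1$; equivalently, $(z_2 - yz_1, (x+y)z_2-xyz_1 - yz_2) = (z_2-yz_1)\cdot(1,x)$ and similarly $(z_2-xz_1, \ldots) = (z_2-xz_1)\cdot(1,y)$. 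This is precisely what makes the lines $\C\cdot(1,x)$ and $\C\cdot(1,y)$ in $\C^2$ descend to elliptic curves: the vector $e_2 - y e_1$ lies on the line spanned by $(1,x)$, and the vector $e_2 - x e_1$ lies on the line spanned by $(1,y)$.

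Next I would, for the curve $E_x$ (embedded via analytic representation $[1\ x]$), identify the image lattice inside $\Lambda$. The sublattice of $\Lambda$ contained in the line $\C\cdot(1,x)$ is generated by $e_2 - y e_1$ (which equals $(z_2-yz_1)(1,x)$) together with the vectors in $\langle f_1,f_2\rangle$ that lie on this line; since $f_1=(1,0)^t$ and $f_2=(0,d)^t$, a combination $\alpha f_1 + \beta f_2 = (\alpha,\beta d)$ lies on $\C(1,x)$ iff $\beta d = \alpha x$, and the primitive such vector is obtained by clearing denominators: $\alpha = \frac{\mathrm{lcm}(d,x)}{x}\cdot\frac{x}{?}$ — concretely $\alpha = \mathrm{lcm}(d,x)/x$ gives $\alpha f_1 = \frac{\mathrm{lcm}(d,x)}{x}e_1$... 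I would be careful here and write the primitive generator as $\frac{\mathrm{lcm}(d,x)}{x}f_1 + \frac{\mathrm{lcm}(d,x)}{d}f_2$, matching the lemma's claim (rewriting $f_1,f_2$ as $e_1,e_2$ in the lemma's local notation for the elliptic-curve lattice). Then $E_x \cong \C/(\text{that rank-2 lattice})$ and the embedding is genuine (injective) because the two generators are $\R$-independent. The analogous computation handles $E_y$ with $x,y$ swapped.

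Finally, to compute the exponent of $E_x$, I would restrict the line bundle. The polarisation on $A_{x,y}$ — type $(1,d)$ — is given by the alternating form that pairs the symplectic basis appropriately; restricting the Hermitian/alternating form to the sublattice of $E_x$, the exponent (degree of $\caL|_{E_x}$) equals the absolute value of the symplectic pairing of the two primitive generators $f_2 - y f_1$ and $\frac{\mathrm{lcm}(d,x)}{x}e_1 + \mathrm{lcm}(d,x)e_2$ (with the standard symplectic form $\langle e_i, f_j\rangle = \delta_{ij}\cdot(1,d)_i$), which after expansion gives $\bigl|\mathrm{lcm}(d,x) - \tfrac{y\,\mathrm{lcm}(d,x)}{x}\bigr|$; the same with $x\leftrightarrow y$ for $E_y$. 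I expect the main obstacle to be bookkeeping the primitivity of the lattice generators and the exact normalisation of the symplectic form (the factor of $d$ in the $(1,d)$-polarisation and which basis vectors it weights), since an off-by-a-factor error there would spoil the exponent formula; the hypothesis $d \mid xy$ should be exactly what guarantees the relevant combinations are integral so that the sublattices are saturated and the complementary curves actually close up. The condition $x \neq y$ ensures $E_x$ and $E_y$ are distinct lines and hence genuinely complementary rather than coinciding.
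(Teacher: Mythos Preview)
Your plan is correct and follows essentially the same route as the paper's proof: identify the two primitive lattice vectors lying on each line $\C\cdot(1,x)$ and $\C\cdot(1,y)$, and obtain the exponent as the absolute value of the symplectic form on those two vectors. One small point: your labelling is swapped relative to the paper---there $f_1,f_2$ denote the first two columns (the $z$-part) and $e_1,e_2$ the standard basis of $\C^2$, so the lemma's vector $\frac{\mathrm{lcm}(d,x)}{x}e_1+\mathrm{lcm}(d,x)e_2$ is exactly your $\frac{\mathrm{lcm}(d,x)}{x}f_1+\frac{\mathrm{lcm}(d,x)}{d}f_2$ expressed in coordinates; you already noticed this discrepancy, so just align the notation when writing it out.
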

\begin{proof}
By symmetry, it is only necessary to do the case $E_y$. A direct computation shows that $[1\ y]^t(z_2-xz_1)=f_2-xf_1$ which is a primitive vector in $\Lambda$. A vector $\frac{lcm(d,y)}ye_1+lcm(d,y)e_2$ is indeed primitive in $\Lambda$ because the coefficients $\frac{lcm(d,y)}y, \frac{lcm(d,y)}d$ are coprime. The exponent is the absolute value of a symplectic form on primitive vectors in the image, hence we get the assertion. 
\end{proof}
\begin{rem}
Note that for general $z_1,z_2$, the curves are non-isogenous to each other. In such a case $A$ contains only these two elliptic curves, so they have to be complementary to each other. One may also explicitly compute the norm maps and associated idempotents using analytic representations.
\end{rem}
%\begin{corollary}
%The intersection $E_x\cap E_y$ contains a group generated by $\frac{f_2}{gcd(x,y)}$ and $\frac{e_1}{\gcd(\frac{lcm(d,x)}{x},\frac{lcm(d,y)}{y})}$ that is isomorphic to $\Z_{gcd(x,y)}\oplus \Z_{\gcd(\frac{lcm(d,x)}{x},\frac{lcm(d,y)}{y})}$
%\end{corollary}

If we multiply the exponents of $E_x,\ E_y$ by $gcd(d,x)$ and $gcd(d,y)$ respectively, we get the same number $dx-dy$ (here, we assume $x>y$). The following lemma is then crucial to find equations.
\begin{lemma}\label{xy}
For any pair $m,n$ of complementary exponents there exist $x,y\in\Z$ (with $d|xy$) satisfying
$$\begin{cases}
dx-dy=m\cdot gcd(d,x)\\
dx-dy=n\cdot gcd(d,y)
\end{cases}$$
\end{lemma}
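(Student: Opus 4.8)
The plan is to unwind the content of Proposition~\ref{propmainEF} and Theorem~\ref{irreducible}: given complementary exponents $m,n$ on a $(1,d)$-polarised surface, Equation~\eqref{sufficient} holds, so we may write $m=\frac{cd}{a}$, $n=\frac{cd}{b}$ with $a,b,c$ pairwise coprime and $a\mid d$, $b\mid d$. The numbers $x,y$ we seek should be the ones appearing implicitly in the displayed period matrix of the previous lemma, and the identity $dx-dy=m\cdot\gcd(d,x)=n\cdot\gcd(d,y)$ from the paragraph before the statement tells us exactly what to aim for: we want $\gcd(d,x)=\frac{d}{?}$ arranged so that $m\cdot\gcd(d,x)$ and $n\cdot\gcd(d,y)$ agree. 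Substituting $m=\frac{cd}{a}$ and $n=\frac{cd}{b}$, the two right-hand sides become $\frac{cd}{a}\gcd(d,x)$ and $\frac{cd}{b}\gcd(d,y)$, so the requirement is that $\frac{\gcd(d,x)}{a}=\frac{\gcd(d,y)}{b}$; the natural choice is $\gcd(d,x)=a\lambda$, $\gcd(d,y)=b\lambda$ for a common factor $\lambda\mid d$ (taking $\lambda$ as large as possible, e.g.\ $\lambda=\frac{d}{\mathrm{lcm}(a,b)}=\frac{d}{ab}$, which is an integer since $a,b\mid d$ and $\gcd(a,b)=1$).

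First I would set $x$ and $y$ explicitly. With $\lambda=d/(ab)$, put $\gcd(d,x)=a\lambda=d/b$ and $\gcd(d,y)=b\lambda=d/a$; a clean way to realise this is to look for $x$ of the form $x=\frac{d}{b}\cdot u$ and $y=\frac{d}{a}\cdot v$ with $u$ coprime to $b$ and $v$ coprime to $a$, chosen so that the two equalities $dx-dy=\frac{cd}{a}\cdot\frac{d}{b}=\frac{cd^2}{ab}$ and $dx-dy=\frac{cd}{b}\cdot\frac{d}{a}=\frac{cd^2}{ab}$ — which are literally the same equation — hold. So there is really only one scalar equation to satisfy, namely $dx-dy=\frac{cd^2}{ab}$, i.e.\ $x-y=\frac{cd}{ab}=c\lambda$. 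Writing $x=\frac{d}{b}u$, $y=\frac{d}{a}v$ this reads $\frac{d}{b}u-\frac{d}{a}v=c\lambda=\frac{cd}{ab}$, i.e.\ $au-bv=c$. Since $\gcd(a,b)=1$ this Bézout-type equation has integer solutions $u,v$ — indeed this is exactly the pair $(u,v)$ already invoked in the main theorem of the introduction. One then picks such a solution; adding multiples of $b$ to $u$ and of $a$ to $v$ simultaneously (which preserves $au-bv=c$) lets one arrange $\gcd(u,b)=1$ and $\gcd(v,a)=1$ by a Chinese Remainder / Dirichlet argument, so that indeed $\gcd(d,x)=d/b$ and $\gcd(d,y)=d/a$ exactly, not merely a multiple.

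Next I would verify the side conditions of the statement: that $x\neq y$ and that $d\mid xy$. For the divisibility, $xy=\frac{d}{b}u\cdot\frac{d}{a}v=\frac{d^2uv}{ab}=d\cdot\frac{duv}{ab}=d\cdot\lambda\cdot duv/d\cdots$ — more cleanly, $xy=d\cdot\frac{d uv}{ab}$ and $\frac{d}{ab}=\lambda\in\Z$, so $xy=d\lambda uv$ is a multiple of $d$. For $x\neq y$: if $x=y$ then $x-y=0\neq c\lambda$ since $c,\lambda\geq 1$ (here one uses $c\geq 1$; the degenerate case $c\lambda=0$ does not occur). Finally I would check consistency with the previous lemma's conclusion, that the exponents it computes, $|{\rm lcm}(d,x)-\tfrac{y}{x}{\rm lcm}(d,x)|$ and $|{\rm lcm}(d,y)-\tfrac{x}{y}{\rm lcm}(d,y)|$, actually come out to $m$ and $n$: since ${\rm lcm}(d,x)=\frac{dx}{\gcd(d,x)}=\frac{dx}{d/b}=bx$ and similarly ${\rm lcm}(d,y)=ay$, the first exponent is $|bx-by| = b|x-y| = b\cdot c\lambda = \frac{bcd}{ab}=\frac{cd}{a}=m$, and symmetrically the second is $a|x-y|=\frac{cd}{b}=n$, as wanted.

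The main obstacle I anticipate is not the existence of the Bézout solution but the bookkeeping needed to force $\gcd(d,x)$ and $\gcd(d,y)$ to be \emph{exactly} $d/b$ and $d/a$ rather than proper multiples — a generic solution of $au-bv=c$ need not make $u$ coprime to $b$ — together with making sure the adjustment used to fix this does not break $d\mid xy$ or $x\neq y$. This is handled by the standard trick of perturbing $(u,v)\mapsto(u+bt,\,v+at)$ and choosing $t$ via CRT / Dirichlet so that $u+bt$ avoids the primes of $b$ and $v+at$ avoids the primes of $a$; since $\gcd(a,b)=1$ these two conditions are on disjoint sets of primes and are simultaneously satisfiable. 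The divisibility $d\mid xy$ is automatic from the formulas for any choice of $u,v$, and $x\ne y$ follows from $x-y=c\lambda\ne 0$, so no further care is needed there.
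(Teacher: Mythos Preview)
Your approach is essentially identical to the paper's: write $m=\frac{cd}{a}$, $n=\frac{cd}{b}$, solve $au-bv=c$, and set $x=\frac{d}{b}u$, $y=\frac{d}{a}v$. The one point where you diverge is unnecessary: you flag as the ``main obstacle'' that a generic B\'ezout solution might fail to have $\gcd(u,b)=1$ and $\gcd(v,a)=1$, and you propose a CRT/Dirichlet perturbation to fix this---but in fact these coprimalities are automatic, since any common prime of $u$ and $b$ (resp.\ $v$ and $a$) would divide $au-bv=c$, contradicting the pairwise coprimality of $a,b,c$; the paper simply notes this and moves on.
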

\begin{proof}
Let $m=\frac{cd}{a},\ n=\frac{cd}{b}$ and assume $a,b,c$ are pairwise coprime.
By Bezout's identity, there exist $u,v$ such that $au-bv=c$. Let $x=\frac{du}{b},\ y=\frac{dv}{a}$. Note that $b,u$ and $a,v$ have to be coprime, so $gcd(d,x)=\frac db$ and $gcd(d,y)=\frac da$. Therefore $dx-dy=\frac{d^2}{ab}(au-bv)=\frac{cd^2}{ab}=m\frac{d}{b}=n\frac{d}{a}$ and we get the assertion.
\end{proof}
To summarize, we have proved the following theorem.
\begin{theorem}\label{periods}
 For any $d,m,n$, we can do the following construction.
 \begin{itemize}
     \item find coprime $a,b,c$ such that $m=\frac{cd}{a},\ n=\frac{cd}{b}$,
     \item find $u,v$ such that $au-bv=c$,
     \item define $x=\frac{du}{b},\ y=\frac{dv}{a}$ (they automatically satisfy $xy|d$).
 \end{itemize}
 Then 
 the set of matrices satisfying $z_3=(x+y)z_2-xyz_1$ with $\Delta=(\frac{cd}{ab})^2$ maps into an irreducible component of $\mathcal{E}_d(m,n)$. 
\end{theorem}
\begin{rem}
If $\mathcal{E}_d(m,n)$ is irreducible then we found one of Humbert surfaces that lies above $\mathcal{E}_d(m,n)$. The equation $z_3=(x+y)z_2-xyz_1$ can be seen as a normalized one, similarly to the principally polarised case considered in \cite[Prop 4.5]{BW} or \cite[Thm 4.(6)]{Bor}.

Moreover, having explicit periods $\tau_E,\tau_F$  of elliptic curves $E,\ F$, one can solve a system of linear equations to get explicit $z_1,z_2$ and hence an explicit period matrix of a surface $A_{x,y,z_1,z_2}$ containing $E$ and $F$.
\end{rem}
\begin{rem}
Note that one can discard the assumption that $(a,b,c)$ are pairwise coprime in the construction given by Theorem \ref{periods} as long as we have $xy|d$. For example, for $d=9$, we have taken $a=b=c=u=1,\ v=0$ and $a=b=c=3, \ u=2,\ v=1$ to get Examples \ref{09} and \ref{36}. 

Since, unfortunately, we do not know how many irreducible components there are for a general $d$, we cannot claim that by considering all possible pairs $x,y$ we get all components of $\mathcal{E}_d(m,n)$.
\end{rem}

At the end of this section, we would like to compare our results to the known ones.
\begin{rem}
In \cite[Thm 9.2.4]{vdG}, the author computes the number of irreducible components of the locus with fixed discriminant. Example \ref{09} (with $\Delta=9^2$) and Example \ref{36} (with $\Delta=3^2$) show that surfaces with different discriminants can lead to irreducible components of the same $\mathcal{E}_d(m,n)$. On the other hand, polarised products having $\Delta=1$ yield components of different $\mathcal{E}_d(m,n)$. Therefore, one cannot conclude our result from \cite{vdG}.

In \cite{G} the author constructs abstract models of irreducible components of $\mathcal{E}_d(m,n)$ indexed by classes of matrices (assuming they are non-empty). 
We can answer his questions (see \cite[Ch.9]{G}) using his notation in the following way. Theorem \ref{irreducible} shows a necessary and sufficient condition for a matrix $M$ to exists. Moreover, using Theorem \ref{irreducible} we have shown that for square free polarisation type $E$, the group $\Gamma_\delta$ acts transitively and using Theorem \ref{periods} one can find a simple form of $M$.  
\end{rem}

\section{Application to genus 3 curves}\label{appl}
Let us call a curve \textit{multielliptic} if its Jacobian is totally decomposable, i.e it is isogenous to a product of elliptic curves. It is still an open question whether in every genus there exists a multielliptic curve (see for example \cite{ES}, \cite{PR}).

For genus 2 and 3, since a (very) general principally polarised abelian variety is a Jacobian, there are infinitely many families of multielliptic curves. Assuming we have a multielliptic curve, a natural question is to ask what the degrees of the elliptic coverings are. For genus 2, we know that a general multielliptic curve is a covering of two elliptic curves and the degrees coincide. The locus of multielliptic curves equals the locus of Humbert surfaces of square discriminant not equal to 1.

For genus 3, multielliptic curves appear in some constructions. Examples of curves that are double coverings of three different elliptic curves may be found in \cite{HLP} or \cite{BO}.  

From now on, $C$ will be a multielliptic genus 3 curve and $E$ will stand for an elliptic curve. In this section we would like to answer the question what triples can be obtained as possible degrees of coverings $f_i:C\ra E_i$ ($i=1,2,3$). As usual, we will only consider \textit{indecomposable} coverings, i.e. coverings $f:C\ra E$ that cannot be decomposed as $f:C\ra E'\ra E$ for some other elliptic curve $E'$. Moreover, if $E_i\ra JC$ are embeddings then we write $JC=E_1\boxplus E_2\boxplus E_3$ if and only if their associated symmetric idempotents satisfy $\epsilon_1+\epsilon_2+\epsilon_3=1$ (i.e. $E_1,e_2,E_3$ are complementary). 

\begin{lemma}
A map $f:C\ra E$ is indecomposable if and only if $f^*:E\ra JC$ is an embedding. In such a case we have $deg(f)=exp(f^*E)$.
\end{lemma}
\begin{proof}
It is a straightforward application of  \cite[Proposition 11.4.3]{BL} using the Hurwitz formula.
\end{proof}
The following theorem is a direct corollary of Theorem \ref{irreducible}.
\begin{theorem}\label{genus3}
Let $C$ be a multielliptic curve such that $JC=E_1\boxplus E_2\boxplus E_3$. Let $f_i:C\ra E_i$ be respective coverings with degrees $d_i$. Then the $d_i$ satisfy Equation \ref{sufficient} (see Proposition \ref{propmainEF}).

Moreover, for every triple $d_1,d_2,d_3$ satisfying Equation \ref{sufficient} (with the assumption that $d_i>1$), there exists a multielliptic genus 3 curve with elliptic curves of exponents $d_i$'s.
\end{theorem}
\begin{proof}
For the first part, it is enough to note that $E_2\boxplus E_3$ is the complementary surface to $E_1$ and therefore it is of type $(1,d_1)$.

For the second part, it is enough to construct such a threefold. If we fix $d_1$, there exists a $(1,d_1)$ polarised surface with $E_2,E_3$ of exponents $d_2,d_3$. Then one uses 3rd remark from page 1548 of \cite{Bor} to construct a principally polarised threefold having $E_1,E_2,E_3$ as complementary elliptic curves with exponents $d_1,d_2,d_3$.
Since $d_i>1$, the threefold is not a polarised product of these curves, hence by \cite[Cor  11.8.2]{BL} it is a Jacobian of a smooth genus 3 curve.
\end{proof}
\begin{rem}
We would like to conclude this section with a few remarks
\begin{enumerate}
    \item By \cite[Thm 4.3.1]{BL} and \cite[Cor 11.8.2]{BL} we can extend the correspondence by considering $d_i=1$ on one side and reducible stable curves of compact type on the other.
\item Note that even though Equation $\ref{sufficient}$ seems not to be symmetric with $d$ playing a different role than $m,n$, Theorem \ref{genus3} shows that it is fully symmetric.
In fact, Equation \ref{sufficient} can be rephrased as two (symmetric) equations $$lcm(m,n)=lcm(m,d)=lcm(n,d).$$
\item Using Section \ref{eqations} and the proof of Theorem \ref{genus3} one may be able to write down explicit period matrices of Jacobians of multielliptic genus 3 curves for triples $d_1,d_2,d_3$.  
\end{enumerate}
\end{rem}

\end{document}